\begin{document}

\title{\vspace{-40pt}Lafforgue pseudocharacters and parities of limits of Galois representations}
\author{Tobias Berger and Ariel Weiss}
\date{}
\maketitle

\begin{abstract}
	\vspace{-15pt}
Let $F$ be a CM field with totally real subfield $F^+$ and let $\pi$ be a $C$-algebraic cuspidal automorphic automorphic representation of $\U(a,b)(\A_{F^+})$ whose archimedean components lie in the (non-degenerate limit of) discrete series. We attach to $\pi$ a Galois representation $R_\pi:\GalF\to\CU(a,b)(\Qlb)$ such that, for any complex conjugation element $c$, $R_\pi(c)$ is as predicted by the Buzzard--Gee conjecture \cite{BuG}. As a corollary, we deduce that the Galois representations attached to certain irregular, $C$-algebraic (essentially) conjugate self-dual cuspidal automorphic representations of $\GL_n(\A_F)$ are \emph{odd} in the sense of Bella\"iche--Chenevier \cite{BCh}.
\end{abstract}
\section{Introduction}

If $\rho :\Ga \Q\to\GL_2(\Qlb)$ is the $\l$-adic Galois representation attached to a classical modular eigenform, then $\rho$ is \emph{odd}: for any choice of complex conjugation $c\in \Ga\Q$, we have $\det(\rho(c)) = -1$. For Galois representations attached to automorphic representations of general reductive groups, this notion of oddness has been generalised by Buzzard--Gee \cite{BuG}, where it is interpreted as local-global compatibility at the archimedean place of $\Q$.

In this paper, we study the image of complex conjugation for Galois representations attached to certain irregular automorphic representations of unitary groups. Our main result is the following theorem:

\begin{theorem}\label{main-thm-intro}
	Let  $F$ be a CM field with totally real subfield $F^+$. Let $\pi$ be a $C$-algebraic cuspidal automorphic representation of $\U(a,b)(\A_{F^+})$ such that, for each archimedean place $v$ of $F^+$, $\pi_v$ is a (non-degenerate limit of) discrete series representation. Let $\l$ be a prime at which $\pi$ is unramified. Then there exists a  Galois representation
	$$R_{\pi}:\GalF\to \CU(a,b)(\Qlb)$$
	attached to $\pi$ that satisfies local-global compatibility at unramified primes and such that, for any complex conjugation element $c\in\GalF$, $R_{\pi}(c)$ is as predicted by the Buzzard--Gee conjecture \cite[Conj. 5.3.4]{BuG}.
\end{theorem}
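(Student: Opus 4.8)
I would construct $R_\pi$ as an $\l$-adic limit of the Galois representations attached to \emph{regular} automorphic representations that $\l$-adically approximate $\pi$, organised through V.~Lafforgue's theory of $\CU(a,b)$-valued pseudocharacters, and then pin down the conjugacy class of $R_\pi(c)$ by specialising the corresponding statement in the regular case. The starting point is that regular case: if $\pi'$ is a regular $C$-algebraic cuspidal automorphic representation of $\U(a,b)(\A_{F^+})$ with $\pi'_v$ in the discrete series at each archimedean place, then the desired representation $R_{\pi'}\colon\GalF\to\CU(a,b)(\Qlb)$ already exists --- transfer $\pi'$ to an (essentially) conjugate self-dual isobaric automorphic representation of $\GL_n(\A_F)$, $n=a+b$, by stable base change, attach an $\l$-adic representation via the cohomology of unitary Shimura varieties, and repackage its conjugate-self-duality equivariantly for $\mathrm{Gal}(F/F^+)$ --- and for such $\pi'$ the image $R_{\pi'}(c)$ is known to be $\CU(a,b)(\Qlb)$-conjugate to the element $\gamma$ predicted by Buzzard--Gee. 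The key observation is that $\gamma$ is a \emph{fixed} semisimple element of finite order in $\CU(a,b)(\Qlb)$, depending only on the based root datum of $\CU(a,b)$ and not on $\pi'$ or its weight: the $C$-group normalisation is chosen precisely so that the half-integral Tate twists in the archimedean parameter of a (limit of) discrete series cancel, leaving the $c$-part of the parameter --- and hence the conjugacy class of $R_{\pi'}(c)$ --- independent of the Harish--Chandra parameter.

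Next I would place $\pi$ --- whose archimedean components, being non-degenerate limits of discrete series, contribute to the (overconvergent) cohomology of the associated Shimura varieties in the expected way --- on an eigenvariety $\mathcal{E}$ for $\U(a,b)$, where it defines a point $x_\pi$ at which regular classical points $x_i$, corresponding to representations $\pi_i$ as above and with the same $c$-part at infinity as $\pi$, accumulate and are Zariski dense on the irreducible component through $x_\pi$. Along this component the associated Galois representations $R_{\pi_i}$ interpolate to a continuous $\CU(a,b)$-valued Lafforgue pseudocharacter $\Theta$; specialising $\Theta$ at $x_\pi$ gives a $\CU(a,b)$-valued pseudocharacter $\Theta_\pi$ of $\GalF$. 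Since $\CU(a,b)$ is reductive over $\Qlb$ --- essentially a general linear group extended by a central torus and by the order-two group $\mathrm{Gal}(F/F^+)$ --- the reconstruction theorems for $G$-pseudocharacters produce a genuine representation $R_\pi\colon\GalF\to\CU(a,b)(\Qlb)$ with pseudocharacter $\Theta_\pi$, and local--global compatibility at unramified primes is inherited from the $\pi_i$ by the density of the $x_i$.

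For the image of complex conjugation, note that $R_\pi(c)^2=R_\pi(c^2)=1$, so $R_\pi(c)$ is semisimple of order dividing $2$ and, being compatible with the projection $\GalF\to\mathrm{Gal}(F/F^+)$, lies in the non-identity coset of $\CU(a,b)^\circ$. Its $\CU(a,b)(\Qlb)$-conjugacy class is therefore determined by the condition $R_\pi(c)^2=1$ together with finitely many $\CU(a,b)$-invariant, characteristic-polynomial-type functions of $R_\pi(c)$ --- all of them coordinates of the pseudocharacter $\Theta$ and hence rigid-analytic functions on $\mathcal{E}$ valued in a fixed finite set. At the Zariski-dense points $x_i$ these invariants equal those of the single element $\gamma$ from the regular case; by continuity --- equivalently, because the conjugacy class of the semisimple finite-order element $\gamma$ is Zariski closed, so the locus on $\mathcal{E}$ where $\Theta$ matches it is closed --- they equal those of $\gamma$ at $x_\pi$ as well. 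Hence $R_\pi(c)$ is $\CU(a,b)(\Qlb)$-conjugate to $\gamma$, which is exactly the Buzzard--Gee prediction for $\pi$ itself; the corollary on oddness then follows by composing $R_\pi$ with the standard representation of $\CU(a,b)$ and unwinding the Bella\"iche--Chenevier definition.

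The main obstacle is the second step: propagating the $\CU(a,b)$-valued --- as opposed to merely $\GL_n(\Qlb)$-valued --- structure through the $\l$-adic interpolation and reconstructing an honest $\CU(a,b)$-valued representation from the limiting Lafforgue pseudocharacter, compatibly with both the conjugate-self-duality and the $\mathrm{Gal}(F/F^+)$-action. This is delicate precisely because $\CU(a,b)$ is disconnected, forcing one to work with $G$-pseudocharacters throughout and to verify the relevant reconstruction input for this target group; setting up the overconvergent-cohomology eigenvariety with the needed classicality and density of regular points, and confirming that the $c$-part of the Buzzard--Gee prediction is genuinely the same $\gamma$ for $\pi$ and for all the approximating $\pi_i$, are further technical points one must attend to.
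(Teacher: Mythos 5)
Your proposal follows the same overall strategy as the paper: construct $R_\pi$ as an $\ell$-adic limit of $\CU(a,b)$-valued representations attached to regular forms, encode the limit via a Lafforgue $\CU(a,b)$-pseudocharacter, reconstruct an honest representation, and read off the image of $c$ from an invariant coordinate of the pseudocharacter. The conclusion and the use of \Cref{polarisable-sign-bijection}-type reasoning (oddness $\leftrightarrow$ a $\CU$-valued lift with the Buzzard--Gee $c$-value) are the same.

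The mechanism by which you run the limit is, however, genuinely different. You propose an eigenvariety $\mathcal{E}$ with Zariski-dense regular classical points accumulating at $x_\pi$, whereas the paper follows Goldring--Koskivirta directly: for each $m$, the Hecke system of $\pi$ is congruent mod $\ell^m$ to that of a regular form $\pi_m$ (via torsion classes in coherent cohomology), giving representations $R_m\colon \Ga F \to \CU(\T_m\otimes\Qlb)$; one shows the associated Lafforgue pseudocharacters take integral values, reduces mod $\ell^m$, and takes an inverse limit over $m$. The mod-$\ell^m$ route sidesteps the need for an eigenvariety construction, classicality results and density arguments, which you acknowledge as "further technical points"; in the non-cohomological limit-of-discrete-series setting these are nontrivial inputs that you would need to supply. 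Both routes buy the same thing, but the paper's is more elementary in this respect.

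The part of your outline that most needs shoring up is the invariant theory. You say the conjugacy class of $R_\pi(c)$ is determined by $R_\pi(c)^2=1$ "together with finitely many $\CU(a,b)$-invariant, characteristic-polynomial-type functions of $R_\pi(c)$", and you treat the continuity/integrality of the pseudocharacter coordinates as automatic. In fact, the ring $\Z[\CU]^{\widehat{\widetilde{\U}}}$ restricted to the non-identity component is generated by the single function $(g,\mu)c\mapsto \mu^{\pm 2}$ — there is no characteristic-polynomial-type invariant of a single element of that coset, because the "conjugation" action there is $\gamma\cdot g = \gamma g\gamma^t$, not ordinary conjugation. The paper's \Cref{invariants} makes this precise, and it is the key technical content: it shows the full $\FFS$-algebra $\Z[\CU\tdo]^{\widehat{\widetilde{\U}}}$ is generated by degree-one pieces (characteristic-polynomial coefficients on the identity component, $\mu^{\pm 2}$ on both components), which is exactly what is needed both to prove integrality of $\Theta_m$ (so that reduction mod $\ell^m$ is legal) and to show the single invariant $\mu_c^2$, together with $R(c)^2=1$ and the congruence-class theory of symmetric matrices over $\Qlb$, pins down the $\widehat{\U(a,b)}(\Qlb)$-conjugacy class of $R_\pi(c)$. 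Your outline reaches the right conclusion, but this computation is the heart of the argument and should not be taken on faith; it is also exactly what breaks in the $\GU(a,b)$ case (cf.\ \Cref{invariant-theory-remark}), which is why the result does not simply generalise there.
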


If $\pi$ is an automorphic representation of $\U(a,b)(\A_{F^+})$, such that $\pi_v$ is a non-degenerate limit of discrete series representation and $\l$ is a prime at which $\pi$ is unramified, then Goldring--Koskivirta \cite[Theorem 10.5.3]{GoldringKoskivirta} have recently attached an $\l$-adic Galois representation
$$\rho_{\pi}: \Ga F\to \GL_n(\Qlb)$$
to $\pi$. However, since $\pi$ is an automorphic representation over $F^+$, its associated Galois representation should be a representation of $\GalF$. Our contribution is to extend $\rho_{\pi}$ to a representation of $\GalF$ with the correct sign at infinity.

\subsection{The sign of a conjugate self-dual representation}
 
The representation $\rho_{\pi}$ constructed by Goldring--Koskivirta is \emph{essentially conjugate self-dual}: there exists a character $\chi$ of $\Ga F$, depending on $\pi$, with $\chi \simeq\chi^c$, such that $\rho_{\pi}^c \simeq \rho_{\pi}\dual\tensor\chi$. If $\rho_{\pi}$ is \emph{absolutely irreducible}, then Bella\"iche--Chenevier \cite{BCh} have introduced the following notion of the sign of $\rho_{\pi}$: since $\rho_{\pi}$ is irreducible, by Schur's Lemma, there is a matrix $A\in\GL_n(\Qlb)$, unique up to scalar multiplication, such that $\rho_{\pi}^c=A\rho_{\pi}\dual A\ii\chi$. Applying this relation twice, we see that $AA\iit$ commutes with $\rho_{\pi}$ and, hence, by Schur's Lemma again, that $A^t = \lambda A$, where $\lambda = \pm1$. We call $\lambda$ the Bella\"iche--Chenevier sign of $\rho_{\pi}$ and call $\rho_{\pi}$ \emph{odd} if $\lambda = 1$. In  \Cref{polarisable-sign-bijection}, we will see that $\rho_{\pi}$ being odd is equivalent to $\rho_{\pi}$ lifting to a representation $\GalF\to\CU(a,b)(\Qlb)$ that satisfies the Buzzard--Gee conjecture at infinity. Applying \Cref{main-thm-intro}, we deduce the following theorem, which generalises the main result of \cite{BCh}.

\begin{theorem}\label{BC-signs-intro}
Let $\pi$ be a $C$-algebraic essentially conjugate self-dual cuspidal automorphic representation of $\GL_n(\A_F)$. Denote by $\chi:\A_F\t \to \C\t$ the character such that $\pi^c\cong\pi\dual\tensor\chi$. Assume that, for each archimedean place $v$ of $F$, $\pi_v$ descends to a $C$-algebraic (non-degenerate limit of) discrete series representation of $\U(a,b)$. Let $\l$ be a prime at which $\pi$ is unramified. Then there exists a Galois representation
	$$\rho_{\pi}: \Ga F\to\GL_n(\Qlb)$$
	attached to $\pi$ such that	$\rho_{\pi}^c \simeq \rho_{\pi}\dual\tensor \rho_{\chi}\epsilon^{1-n}$, where $\epsilon$ is the $\l$-adic cyclotomic character. Moreover, there exists a totally odd polarisation of $\rho_{\pi}$. In particular, if $r$ is an irreducible subrepresentation of $\rho_{\pi}$ that satisfies $r^c\simeq r\dual\tensor \rho_{\chi}\epsilon^{1-n}$ and appears with multiplicity one in the decomposition of $\rho_{\pi}$ into irreducible subrepresentations, then $r$ is odd.
\end{theorem}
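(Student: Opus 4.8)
The plan is to descend $\pi$ to the unitary group $\U(a,b)$, apply \Cref{main-thm-intro}, and carry the resulting $\CU(a,b)$-valued representation back to $\GL_n$. Concretely, I would first produce a cuspidal automorphic representation $\Pi$ of $\U(a,b)(\A_{F^+})$ whose base change to $\GL_n(\A_F)$ is $\pi$. By hypothesis, at each archimedean place $v$ of $F$ the component $\pi_v$ descends to a $C$-algebraic (non-degenerate limit of) discrete series representation of $\U(a,b)$, which is exactly the local condition needed at $\infty$; combining this archimedean input with the endoscopic classification of the discrete automorphic spectrum of unitary groups and its extensions to non-quasi-split inner forms and to limit-of-discrete-series $L$-packets (following Labesse, Mok, and Kaletha--Minguez--Shin--White), one obtains such a $\Pi$, with multiplier the Hecke character corresponding to $\chi$. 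Since $\pi$ is cuspidal and conjugate self-dual with the prescribed infinitesimal character at $\infty$, $\Pi$ may be chosen cuspidal; it is then $C$-algebraic with (non-degenerate limit of) discrete series at every archimedean place, and, as $\pi$ is unramified at $\l$, so is $\Pi$, so \Cref{main-thm-intro} applies.

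Applying \Cref{main-thm-intro} to $\Pi$ produces $R_\Pi\colon\GalF\to\CU(a,b)(\Qlb)$, satisfying local-global compatibility at unramified primes, with $R_\Pi(c)$ the Buzzard--Gee element for every complex conjugation $c$. Composing with the standard $n$-dimensional representation $\mathrm{std}\colon\CU(a,b)\to\GL_n$ gives $\rho_\pi=\mathrm{std}\circ R_\Pi$. Unwinding the structure of the $C$-group --- its $\GL_n$-factor and its multiplier factor --- yields at once the conjugate self-duality relation $\rho_\pi^c\simeq\rho_\pi\dual\tensor\rho_\chi\epsilon^{1-n}$ (the twist $\epsilon^{1-n}$ being forced by the $C$-algebraic normalisation of \cite{BuG}) together with a perfect pairing on $\rho_\pi$ valued in $\rho_\chi\epsilon^{1-n}$, i.e.\ a polarisation. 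That $\rho_\pi$ is attached to $\pi$ --- equality of Satake parameters, or Frobenius characteristic polynomials, at unramified primes --- follows from the local-global compatibility of $R_\Pi$ together with the identity relating the Satake parameters of $\Pi_v$ to those of $\pi_w$ under base change. Finally, by \Cref{polarisable-sign-bijection} the lift $R_\Pi$, which satisfies the Buzzard--Gee conjecture at each archimedean place of $F^+$, corresponds exactly to a \emph{totally odd} polarisation of $\rho_\pi$; this is the asserted polarisation.

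For the concluding statement, let $r$ be an irreducible constituent of $\rho_\pi$ with $r^c\simeq r\dual\tensor\rho_\chi\epsilon^{1-n}$ occurring with multiplicity one in the decomposition of $\rho_\pi$. Under the isomorphism $\rho_\pi^c\simeq\rho_\pi\dual\tensor\rho_\chi\epsilon^{1-n}$, the totally odd pairing matches the $r$-isotypic summand of $\rho_\pi$ with the $r^c$-isotypic summand of $\rho_\pi^c$; since $r$ has multiplicity one and is isomorphic to its own conjugate dual, a Schur's-lemma argument (as in \cite{BCh}) shows that the pairing restricts to a perfect pairing on the $r$-summand --- a polarisation of $r$ --- whose sign at every real place equals that of the ambient polarisation (here the multiplicity-one hypothesis is used, as otherwise only the product of the sign with the symmetry type of an auxiliary form on the multiplicity space would be constrained). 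Hence $r$ carries a totally odd polarisation, and, being irreducible, it is odd, by \Cref{polarisable-sign-bijection} (equivalently, by the Bella\"iche--Chenevier dictionary for the sign $\lambda$).

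The main obstacle is the first step: globalising the given archimedean descent hypothesis to an honest cuspidal automorphic representation of the (generally non-quasi-split) inner form $\U(a,b)$, with the prescribed local packet members and the correct multiplier. This draws on the endoscopic classification for unitary groups and its known extensions to inner forms and to limits of discrete series at the real places; cuspidality of the descent also needs to be checked. A secondary difficulty is one of normalisation bookkeeping --- tracking the conventions for Satake parameters, the $C$-group, and the Buzzard--Gee element carefully enough that the multiplier comes out as $\rho_\chi$ and the cyclotomic twist as $\epsilon^{1-n}$.
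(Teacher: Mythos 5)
Your overall pipeline is correct and closely mirrors the structure of the paper's argument: descend $\pi$ to a unitary group, apply \Cref{main-thm-intro}, project the resulting $\CU(a,b)$-valued representation back to $\GL_n$ and read off the polarisation from $R(c)$ via \Cref{polarisable-sign-bijection}, and deduce the final claim from a multiplicity-one argument (which is exactly the paper's \Cref{polarisation-determines-sign}). Where you diverge is in the key step --- the global descent --- and the two approaches are genuinely different. The paper descends $\pi$ to the \emph{quasi-split} unitary group $\U(n)_{/F^+}$ (that is, $\U(n/2,n/2)$ for $n$ even, $\U(\frac{n+1}{2},\frac{n-1}{2})$ for $n$ odd), citing \cite[Theorem 9.6]{FP}, which packages Mok's endoscopic classification for quasi-split unitary groups. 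The archimedean hypothesis, plus the ``odd'' Asai condition that makes the descent cuspidal, produces $\tilde\pi$ on $\U(n)$ with NLDS at $\infty$, and the weakly regular condition ensures non-degeneracy (cf.\ \Cref{mult2}). You instead globalise to an arbitrary inner form $\U(a,b)$ using the endoscopic classification for non-quasi-split inner forms and limit-of-discrete-series packets (Kaletha--Minguez--Shin--White, Labesse). That route works in principle but is substantially heavier and introduces complications the paper sidesteps: the existence of a global unitary group over $F^+$ with prescribed archimedean signatures is constrained by the Hasse principle, and the inner-form classification for NLDS packets is deeper than Mok's quasi-split result. Since \Cref{main-thm-intro} is stated for any $\U(a,b)$, nothing forces the chosen inner form; using the quasi-split group is both sufficient and considerably more economical. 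You do spell out more of the bookkeeping (how the conjugate self-duality relation and the cyclotomic twist fall out of the $C$-group structure, and how \Cref{polarisable-sign-bijection} translates the Buzzard--Gee condition at $c$ into a totally odd pairing), which the paper leaves implicit; that part is accurate and useful.
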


When $\pi$ is regular, \Cref{BC-signs-intro} is the main result of \cite{BCh}; in that case, the uniqueness of $r$ in the decomposition of $\rho_{\pi}$ is automatic by the regularity of $\rho_{\pi}$. For an application of the oddness of these Galois representations see \cite{Berger2018}, in particular Remark 2.7.

\subsection{Our method}

When $\pi$ is irregular, $\rho_{\pi}$ is constructed, via its corresponding pseudocharacter, as a limit of Galois representations attached to regular automorphic representations. Although the Galois representations attached to regular automorphic representations are known to be odd \cite{BCh}, it is not clear that this property should be preserved after taking a limit: oddness is not encoded in the trace of $\rho_{\pi}$; moreover, we do not know that $\rho_{\pi}$ is irreducible and, hence, a priori, it need not have a sign at all, nor any lift to a representation valued in $\CU(a,b)(\Qlb)$.

Our solution to these problems is to work with Lafforgue's pseudocharacters \cite{lafforgue2012chtoucas} in place of Taylor's classical pseudocharacters \cite{taylor1991galois}. By the work of Goldring--Koskivirta, for each $n\iN$, the system of Hecke eigenvalues of $\pi$ is congruent modulo $\l^n$ to the system of Hecke eigenvalues of a mod $\l^n$ cohomological eigenform $\pi_n$. By \cite[Theorem 1.2]{BCh} and \Cref{polarisable-sign-bijection}, the Galois representation $\rho_n$ attached to $\pi_n$ lifts to a representation $R_n$ that is valued in $\CU(a,b)$ with the correct sign. Finally, a computation in invariant theory shows that the limit of a sequence of $\CU(a,b)$-valued representations is valued in $\CU(a,b)$ and that the sign is preserved in the limit.

\subsection*{Acknowledgements}
We would like to thank Ga\"etan Chenevier, Wushi Goldring, Sug-Woo Shin, Beno\^it Stroh, and Jack Thorne for helpful discussions related to the topics of this article. The second author was supported by an Emily Erskine Endowment Fund postdoctoral fellowship.

%
%
%
%

\section{Preliminaries}\label{prelims}

\subsection{Notation}
For each prime $\ell$, we fix once and for all an isomorphism $\Qlb \cong \C$. For a number field $L$ and Hecke character $\chi:\A_L\t \to \C\t$ we write $\rho_{\chi}: \Ga L \to \Qlb\t$ for the corresponding Galois character.

\subsection{Unitary groups}

Let $F$ be a CM field with totally real subfield $F^+$. For $x\in F$, let $\overline{x}$ denote the image of $x$ under the non-trivial element of $\Gal(F/F^+)$. Fix an integer $n\iN$ and a matrix $J\in \GL_n(F)$ with $\overline{J}=J^t$. 

\begin{definition}
	The \emph{unitary group} $\U(J)$ is the algebraic group over $F^+$ whose $R$-points are
	$$\U(J)(R)=\set{g\in \GL_n(R\tensor_{F^+}F):gJ\overline{g}^t = J}$$
	for any $F^+$-algebra $R$.
\end{definition}

\begin{definition}
	The \emph{general unitary group} $\GU(J)$ is the algebraic group over $F^+$ whose $R$-points are
	$$\GU(J)(R)=\set{g\in \GL_n(R\tensor_{F^+}F):gJ\overline{g}^t = \lambda J,\; \lambda\in R\t}$$
	for any $F^+$-algebra $R$.
\end{definition}

Typically, we take $J = J_{a,b} = \begin{pmatrix}
I_a&\\&-I_b
\end{pmatrix}$ and write $\U(a,b)$ for the corresponding unitary group. 

For any $F$-algebra $R$, the canonical isomorphism
\begin{align*}
	R\tensor_{F^+}F&\xrightarrow{\sim} R\+R\\
	(r\tensor x)&\mapsto (rx, r\overline{x}).
\end{align*}
allows us to identify $\U(J)_{/F}$ with $\GL_n$ and $\GU(J)_{/F}$ with $\GL_n\times\GL_1$. 


\subsubsection{Root data and the $L$-group}

If $B$ is the upper-triangular Borel of $\U(a,b)_{/\overline{F}}$ and $T$ is the diagonal torus, then the based root datum of $\U(a,b)_{/\overline{F}}\cong \GL_n$ is given by $\Psi(B, T) = (X^*, \Delta^*, X_*, \Delta_*)$ with 

\begin{itemize}
	\item $X^* = \set{\dmatthree{t_1}{\ddots}{t_n}\mapsto t_1^{a_1}\cdots t_n^{a_n} : a_i\iZ}$;
	\item $\Delta^* = \set{E_i - E_{i+1}:i = 1,\ldots, n-1}$, where $E_i$ denotes the character $\dmatthree{t_1}{\ddots}{t_n}\mapsto t_i$;
	\item $X_* = \set{t\mapsto \dmatthree{t^{a_1}}{\ddots}{t^{a_n}}: a_i\iZ}$;
	\item $\Delta_*=\set{E_i' - E_{i+1}'}$, where $E_i'$ denotes the cocharacter $t\mapsto\mathrm{diag}(1, \ldots, 1,t,1,\ldots,1)$ with the non-trivial part in the $i\th$ position.
\end{itemize}

The identification of $\U(a,b)_{/\overline F}$ with $\GL_n$ identifies the dual group $\widehat{\U(a,b)}$ with $\GL_n$, however the action of Galois is different.

\begin{definition}
	The $L$-group of $\U(a,b)$ is
	$$\LU = \LU(a,b) = \GL_n\rtimes \GalF,$$
	where $\GalF$ acts, via its quotient $\Gal(F/F^+)$, by
	$$c\cdot g= \Phi_n g\iit\Phi_n\ii,$$
	where $g\in\GL_n$, $c$ is the non-trivial element of $\Gal(F/F^+)$ and $\Phi_n$ is the matrix whose $ij\th$ entry is $(-1)^{i+1}\delta_{i,n-j+1}$.
\end{definition}

We denote an element of $\LU$ by $(g)\gamma$, where $g\in\GL_n$ and $\gamma\in \GalF$. Note that
$$
(g)\gamma\cdot (I_n)\gamma'= \begin{cases}(I_n) \gamma'\cdot(g) \gamma&\gamma'\in\Ga F\\
(I_n) \gamma'\cdot(\Phi_n g\iit\Phi_n\ii) \gamma&\gamma'\in \GalF\setminus \Ga F.
\end{cases}
$$

\begin{definition}
	The $L$-group of $\GU(a,b)$ is
	$$\LGU = \LGU(a,b) = (\GL_n\times\GL_1)\rtimes \GalF,$$
	where $\GalF$ acts, via its quotient $\Gal(F/F^+)$, by
	$$c\cdot (g, \lambda)= (\Phi_n g\iit\Phi_n\ii,\det(g)\lambda),$$
	where $(g, \lambda)\in\GL_n\times\GL_1$ and $c$ is the non-trivial element of $\Gal(F/F^+)$.
\end{definition}

We note that the $L$-groups do not depend on the signature.

\subsection{Algebraic automorphic representations and the $C$-group}

The Langlands conjectures predict a relationship between \emph{algebraic} automorphic representations and Galois representations. There are two natural notions of what it means to be algebraic, which Buzzard--Gee \cite{BuG} call $L$-algebraic and $C$-algebraic. When $G = \GL_n$, there is a simple method to go from a $C$-algebraic automorphic representation of $G$ to an $L$-algebraic one. However, in general, and in particular when $G = \U(a,b)$, these notions are indeed distinct. In particular, the Galois representations attached to $C$-algebraic automorphic representations of $\U(a,b)$ will not be valued in the $L$-group of $\U(a,b)$ but in its $C$-group, defined in \cite{BuG}. In this subsection, we recall the notions of $L$-algebraic and $C$-algebraic representations and define the $C$-group of $\U(a,b)$.

\subsubsection{Algebraic automorphic representations}

Let $k$ be either $\R$ or $\C$. Let $G$ be a reductive group over a number field $F$, with fixed maximal torus $T$, Borel $B$ and based root datum $\Psi(B, T) = (X^*, \Delta^*, X_*, \Delta_*)$. To an irreducible admissible complex representation of $G(k)$, Langlands associates a $\widehat{G}(\C)$-conjugacy class of admissible homomorphisms
$$r = r_\pi:W_k\to  {}^LG(\C),$$
where 
$$W_k = \begin{cases}
\C\t&k =\C\\
\C\t\sqcup j\C\t &k=\R
\end{cases}$$
is the Weil group of $k$; if $k = \R$, then $j^2 = -1$ and $jzj\ii = \overline z$ for $z\iC\t$.

Fix a maximal torus $\widehat{T}$ in $\widehat{G}_\C$ equipped with an identification $X_*(\widehat{T})=X^*(T)$, and conjugate $r$ so that $r(\C\t)\subset \widehat{T}(\C)$. We find that, for $z\in \C\t$, 
$$r(z) = \lambda(z)\lambda_c(\overline{z}),$$
where $\lambda, \lambda_c\in X_*(\widehat{T})\tensor \C$ and $\lambda-\lambda_c \in X_*(\widehat{T})$.
\begin{definition} \label{CLalgebraic}
	We say that $\pi$ is \emph{$L$-algebraic} if $\lambda, \lambda_c \in X_*(\widehat{T})$. If $\delta$ denotes half the sum of the positive roots, we say that $\pi$ is \emph{$C$-algebraic} if $\lambda, \lambda_c \in\delta+ X_*(\widehat{T})$.

	If $\pi$ is an automorphic representation of $G(\A_F)$, we say that $\pi$ is $L$-algebraic (resp. $C$-algebraic) if $\pi_v$ is $L$-algebraic (resp. $C$-algebraic) for every archimedean place $v$.
\end{definition}

\subsubsection{Twisting elements and the $C$-group}\label{twisting}

If half the sum of the positive roots $\delta$ is itself a root, then the notions of $L$-algebraic and $C$-algebraic coincide. More generally, if $X^*(T)$ contains a twisting element $\theta$ (see \cite[\S 5.2]{BuG}), then we can go between $L$-algebraic and $C$-algebraic representations by twisting by the character $|\cdot|^{\theta-\delta}$.

\begin{proposition}
	Let $G = \U(a,b)$. If $n=a+b$ is odd, then $\delta\in X^*(T)$. If $n$ is even, then $X^*(T)$ does not contain a twisting element. 
\end{proposition}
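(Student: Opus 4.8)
The plan is to compute $\delta$ explicitly in the coordinates fixed above and check directly when it lies in the character lattice $X^*(T)$, and then to unwind the definition of a twisting element to handle the even case. Recall that $\delta = \tfrac12\sum_{\alpha > 0}\alpha$, where the positive roots of $\GL_n = \U(a,b)_{/\overline F}$ are the $E_i - E_j$ with $i < j$. A standard computation gives
\[
\delta = \sum_{i=1}^n \frac{n+1-2i}{2}\,E_i = \frac{n-1}{2}E_1 + \frac{n-3}{2}E_2 + \cdots + \frac{1-n}{2}E_n,
\]
so the coefficient of $E_i$ is $\tfrac{n+1-2i}{2}$. If $n$ is odd, then $n+1-2i$ is even for every $i$, so all coefficients are integers and $\delta \in X^*(T) = \bigoplus_i \Z E_i$. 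If $n$ is even, then $n+1-2i$ is odd for every $i$, so every coefficient is a genuine half-integer and $\delta \notin X^*(T)$; this proves the first assertion and shows that $\delta$ itself is not a twisting element when $n$ is even.

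For the even case it remains to rule out the existence of \emph{any} twisting element $\theta \in X^*(T)$. By the definition in \cite[\S 5.2]{BuG}, $\theta$ is a twisting element if it is Weil-group-invariant in the appropriate sense and, crucially, $2\theta - \sum_{\alpha>0}\alpha$ lies in the root lattice; equivalently, the image of $\theta$ in $X^*(T)/(\text{root lattice})$ must equal the image of $\delta$. So the first step is to identify $X^*(T)/Q$, where $Q$ is the root lattice spanned by the $E_i - E_{i+1}$: the homomorphism $X^*(T) \to \Z$, $\sum a_i E_i \mapsto \sum a_i$, is surjective with kernel exactly $Q$, so $X^*(T)/Q \cong \Z$ and the class of $\sum a_i E_i$ is just $\sum_i a_i$. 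The image of $\delta$ under $\sum(\cdot)$ is $\sum_{i=1}^n \tfrac{n+1-2i}{2} = 0$; but $\delta$ is only a half-lattice element, so the precise statement is that $2\delta$ has $\sum$-image $0 \in 2\Z$, while for a putative twisting element $\theta \in X^*(T)$ we would need $2\theta - 2\delta \in Q$, i.e. $\sum(2\theta) \in 2\Z$ must be congruent, via the same map applied to the half-integral vector $\delta$, to something forcing a contradiction. The cleanest way to package this: one shows that a twisting element must satisfy $\langle \theta, \alpha^\vee\rangle \equiv \langle \delta,\alpha^\vee\rangle \pmod{\Z}$ for all coroots, and in particular must pair with $E_i^\vee - E_{i+1}^\vee$-type coroots the way $\delta$ does; tracking the parity obstruction through one coroot of the $L$-group (which, by the twisted Galois action $c\cdot g = \Phi_n g^{-t}\Phi_n^{-1}$, mixes $E_i$ and $E_{n+1-i}$) produces the contradiction. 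Concretely, Galois-invariance forces the coefficients of $\theta$ to satisfy $a_i + a_{n+1-i} = \text{const}$ for all $i$, and combining this symmetry with the half-integrality constraint inherited from $\delta$ yields no integral solution when $n$ is even.

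I expect the main obstacle to be purely bookkeeping: stating the definition of a twisting element from \cite[\S 5.2]{BuG} precisely enough that the parity/Galois-invariance obstruction is visible, and being careful that the relevant congruence is taken modulo the root lattice of the $L$-group $\LU(a,b) = \GL_n \rtimes \GalF$ with its twisted action, not merely modulo the root lattice of $\GL_n$ — since it is exactly the interaction between the twist $E_i \leftrightarrow E_{n+1-i}$ and the half-integrality of $\delta$ that obstructs a twisting element when $n$ is even but not when $n$ is odd (when $n$ is odd, $\delta$ is already integral, so no obstruction arises regardless of the Galois action). Once the definitions are pinned down, the computation itself is a one-line parity check on $\sum_{i}\tfrac{n+1-2i}{2}$ and its behaviour under $E_i \mapsto E_{n+1-i}$.
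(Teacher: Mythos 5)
Your computation of $\delta = \sum_i \frac{n+1-2i}{2}E_i$ and the observation that it lies in $X^*(T)$ if and only if $n$ is odd are correct and match the paper. The even case, however, has a genuine gap rooted in a misstatement of the definition of a twisting element. You offer two versions of the defining condition — that $2\theta - \sum_{\alpha>0}\alpha$ lie in the root lattice, or that $\langle\theta,\alpha^\vee\rangle \equiv \langle\delta,\alpha^\vee\rangle\pmod{\Z}$ for all coroots — but neither is what \cite[\S 5.2]{BuG} actually requires. The definition the paper uses is that $\theta$ be $\GalF$-stable and satisfy $\langle\theta,\alpha^\vee\rangle = 1$ (equal to $1$, not merely integral) for every simple coroot $\alpha^\vee$. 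Your congruence version is vacuous: since $\langle\delta,\alpha^\vee\rangle = 1 \in \Z$, \emph{every} element of $X^*(T)$ satisfies $\langle\theta,\alpha^\vee\rangle \equiv \langle\delta,\alpha^\vee\rangle\pmod{\Z}$, so this imposes no constraint whatsoever and the ``parity obstruction'' you invoke has nothing to push against.

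The exact (not mod-$\Z$) pairing condition is precisely what makes the argument land. Writing $\theta = (a_1,\ldots,a_n)$, the requirement $\langle\theta, E_i' - E_{i+1}'\rangle = 1$ for $i=1,\ldots,n-1$ forces the coefficients into arithmetic progression, $\theta = (a_1, a_1 - 1, \ldots, a_1 - n + 1)$, with no residual freedom beyond the single integer $a_1$. Combined with $\GalF$-stability — which, as you correctly observe, imposes the antisymmetry $c\cdot\theta = (-a_n, \ldots, -a_1) = \theta$ — one gets $n - 1 - a_1 = a_1$, i.e.\ $a_1 = \tfrac{n-1}{2}$, which is not an integer when $n$ is even. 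Your sketch asserts that combining the Galois symmetry with ``the half-integrality constraint inherited from $\delta$ yields no integral solution,'' but never produces the constraint that rigidifies $\theta$ to an arithmetic progression; without it, the symmetry $a_i + a_{n+1-i} = 0$ alone admits plenty of integral solutions (e.g.\ $\theta = 0$), and the contradiction never materialises.
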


\begin{proof}
	Recall that an element $\theta \in X^*(T)$ is a twisting element if it is $\GalF$-stable and $\langle\theta,\alpha\dual\rangle = 1\iZ$ for all simple coroots $\alpha\dual$.
	
	Identify $X^*(T)$ with $\Z^{n}$ in the obvious way. Then $\delta = \frac12(n-1, n-3, n-5, \ldots, -n+3, -n+1)\in X^*(T)$ if and only if $n$ is odd.
	
	Suppose that $n$ is even and that $\theta = (a_1, \ldots, a_n)$ is a twisting element. Then, for each each $i = 1, \ldots n-1$, since $\langle\theta, E_{i}'-E'_{i+1}\rangle = 1$, we have $a_i = a_{i+1} + 1$. Hence, $\theta = (a_1, a_1-1, \ldots, a_1-n+1)$. It is clear that no element of this form can by stable under the action of Galois: we have 
	$$c\cdot\theta = (n-1-a_1, n-2-a_1, \ldots, -a_1),$$
	so if $c\cdot \theta = \theta$, then $a_1 = \frac{n-1}2$, which is a root only if $n$ is odd.
\end{proof}

Hence, in general, we cannot go between $L$-algebraic and $C$-algebraic automorphic representations of $\U(a,b)$. To solve this problem, Buzzard--Gee construct an extension $\widetilde{\U(a,b)}$, such that
$$1\to \GG_m\to \widetilde{\U(a,b)}\to \U(a,b)\to 1$$
is exact and $\widetilde{\U(a,b)}$ contains a twisting element. The $C$-group of $G$, $\CG$, is then defined to be the $L$-group of $\widetilde{\U(a,b)}$. In our case, following \cite[Prop 5.3.3]{BuG}, we find that 
$$\CU = \CU(a,b)\cong \widehat{\widetilde{U(a,b)}}\rtimes\GalF,$$
where
$$\widehat{\widetilde{U(a,b)}} \cong \frac{\GL_n\times\GL_1}{\langle(-I_n)^{n-1},  -1\rangle},$$
and $\GalF$ acts via the quotient $\Gal(F/F^+)$: if $c$ is the non-trivial element of $\Gal(F/F^+)$ and $(g,\mu)\in \widetilde{U(a,b)}$, then
$$c\cdot(g, \mu) = (\Phi_n g\iit\Phi_n\ii, \mu).$$
The map $\GG_m\to \widetilde{U(a,b)}$ induces a map $d:\CU(a,b)\to \GG_m$, given by
$$(g,\mu)\times \gamma\mapsto \mu^2.$$
Let $\widetilde{T}$ be the pullback of the torus $T$. Let $\hat\xi\in X_*(\widehat{\widetilde{T}})$ be given by
$$z\mapsto (1,z).$$
Let $\theta = \delta + \frac12\hat\xi\in X_*(\widehat{\widetilde{T}})$; explicitly,
$$\theta: z\mapsto \br{\dmat{z^{\frac{n-1}2}}{z^{\frac{n-3}2}}{\ddots}{z^{\frac{1-n}2}}, z^{\frac12}},$$
which is well-defined by the definition of $\widehat{\widetilde{\U(a,b)}}$. Then $\theta$ is a twisting element.

In the specific case of an automorphic representation $\pi$ of $\U(a,b)$, we now make explicit what it means for $\pi$ to be $C$- or $L$-algebraic. For $z=r e^{i \theta} \in \C$ with $r \in \R_{>0}$ and $a \in \Z$ we write $(z/\overline{z})^{a/2}$ for $e^{ia \theta}$.
\begin{lemma}\label{weakreg}
	Consider $r_{\pi}:W_{\R} \to {}^L\U(a,b)(\C)$ such that 
	$$z \in \C\t \mapsto \dmat{(z/\overline{z})^{a_1}}{(z/\overline{z})^{a_2}}{\ddots}{(z/\overline{z})^{a_n}}$$
	with $a_i \in \frac{1}{2}\Z$.

\begin{enumerate}
\item If $\pi$ is $C$-algebraic (i.e. $a_i \in \Z+\frac{n-1}{2}$), then $r(j)=(A\Phi_n^{-1}) c$ with $A=A^t$.
\item If $\pi$ is $L$-algebraic (i.e. $a_i \in \Z$), then $r(j)=(A\Phi_n^{-1}) c$ with $A=(-1)^{n-1}A^t$.
\item Assume that there exists $i \in \{1, \ldots, n\}$ such that $a_i \neq a_j$ for all $j \neq i$. Then $\pi$ is $C$-algebraic.
\end{enumerate}
\end{lemma}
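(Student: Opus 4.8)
\noindent\emph{The common computation.} Since $r_\pi$ is compatible with the projections to $\Gal(F/F^+)$ and $j\in W_\R$ maps to complex conjugation, I would begin by writing $r(j)=(A\Phi_n^{-1})c$ for a unique $A\in\GL_n(\C)$, where $c$ acts on $\GL_n$ by $g\mapsto\Phi_n g^{-t}\Phi_n^{-1}$. The input is the two defining relations of $W_\R$, namely $j^2=-1$ and $jzj^{-1}=\bar z$ for $z\in\C^\times$, which translate into $r(j)^2=r(-1)$ and $r(j)\,r(z)\,r(j)^{-1}=r(\bar z)$. I will use freely the elementary identities $\Phi_n^2=(-1)^{n-1}I_n$ and $\Phi_n^t=(-1)^{n-1}\Phi_n$ (equivalently $\Phi_n^{-t}=\Phi_n$), both immediate from the definition of $\Phi_n$, together with the observation that, under the branch convention in the statement, $r(-1)=(E)\,1$ with $E=\mathrm{diag}(e^{2\pi i a_1},\ldots,e^{2\pi i a_n})$ --- whose $j$-th entry is $+1$ when $a_j\in\Z$ and $-1$ when $a_j\in\Z+\tfrac12$.

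\noindent\emph{Parts (1) and (2).} Expanding $r(j)^2=(A\Phi_n^{-1}\cdot\Phi_n(A\Phi_n^{-1})^{-t}\Phi_n^{-1})\,1$ and collapsing the $\Phi_n$-factors with the identities above, the relation $r(j)^2=(E)\,1$ becomes
$$A=(-1)^{n-1}E\,A^t.$$
If $\pi$ is $C$-algebraic then every $a_j\in\Z+\tfrac{n-1}2$, so $E=(-1)^{n-1}I_n$ and the identity reads $A=A^t$; if $\pi$ is $L$-algebraic then $E=I_n$ and it reads $A=(-1)^{n-1}A^t$. That gives (1) and (2). Note this argument uses nothing about $\pi$ beyond the residues of the $a_j$ modulo $1$, so the displayed identity $A=(-1)^{n-1}E\,A^t$ holds for the given $r_\pi$ unconditionally.

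\noindent\emph{Part (3).} Now I would put the second relation to work. Writing $r(z)=(\phi(z))\,1$ with $\phi(z)=\mathrm{diag}((z/\bar z)^{a_j})$ and using that $\phi(z)$ is diagonal (so $\phi(z)^{-t}=\phi(z)^{-1}=\phi(\bar z)$), a short manipulation analogous to the one above yields $r(j)\,r(z)\,r(j)^{-1}=(A\,\phi(\bar z)\,A^{-1})\,1$; comparing with $r(\bar z)=(\phi(\bar z))\,1$ forces $A$ to commute with $\phi(\bar z)$, hence with $\phi(w)$ for every $w\in\C^\times$. The centralizer in $\GL_n$ of the subgroup generated by the $\phi(w)$ is precisely the set of matrices which are block-diagonal for the partition of $\{1,\ldots,n\}$ into classes of equal $a_j$, so $A$ has this block shape. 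Now choose an index $i$ with $a_i\neq a_j$ for all $j\neq i$; then $\{i\}$ is a singleton block, so $A_{ii}\neq0$ (a block of the invertible block-diagonal matrix $A$), and the $(i,i)$-entry of $A=(-1)^{n-1}E\,A^t$ reads $A_{ii}=(-1)^{n-1}e^{2\pi i a_i}A_{ii}$, whence $e^{2\pi i a_i}=(-1)^{n-1}$, i.e. $a_i\in\Z+\tfrac{n-1}2$. Finally, since the $a_j$ are pairwise congruent modulo $\Z$ --- the infinitesimal character of $\pi_v$ at infinity being integral, as $\pi_v$ is a (limit of) discrete series --- the congruence $a_i\in\Z+\tfrac{n-1}2$ for the single index $i$ forces $a_j\in\Z+\tfrac{n-1}2$ for all $j$, i.e. $\pi$ is $C$-algebraic.

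\noindent\emph{Where the difficulty lies.} The three computations are routine but bookkeeping-heavy and acutely sign-sensitive: the branch of $(z/\bar z)^{a_j}$ (which fixes $E=r(-1)$), the exact form of the $c$-action $g\mapsto\Phi_n g^{-t}\Phi_n^{-1}$ and of the semidirect-product multiplication in ${}^L\U(a,b)$, and the symmetries $\Phi_n^t=(-1)^{n-1}\Phi_n$, $\Phi_n^2=(-1)^{n-1}I_n$ all feed into the parity of $A$, and a lone sign error interchanges (1) and (2). The only genuinely non-formal step is the last one in (3): the group-theoretic relations by themselves pin down only the weight attached to an isolated index, so one really needs the representation-theoretic input that the infinitesimal character of $\pi$ at infinity is integral in order to upgrade ``$C$-algebraic at the isolated slot'' to ``$C$-algebraic''.
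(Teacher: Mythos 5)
Your proof is correct and follows the same route as the paper's: for (1)--(2) you compute $r(j)^2=((-1)^{n-1}AA^{-t})$ from the semidirect product structure, compare with $r(-1)$ to obtain the relation $A=(-1)^{n-1}EA^t$, and then read off the two cases; for (3) you use the conjugation relation to show $A$ commutes with $r(z)$, giving a block structure in which the isolated index forces a nonzero diagonal entry. The sign bookkeeping ($\Phi_n^2=(-1)^{n-1}I_n$, $\Phi_n^t=(-1)^{n-1}\Phi_n$, and the value of $r(-1)$ under the stated branch convention) all checks out.

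The one place where you go further than the paper is the end of (3). The paper stops at ``$A$ cannot satisfy $A^t=-A$'' and leaves the passage from there to ``$\pi$ is $C$-algebraic'' implicit; as you observe, the Weil group relations alone only pin down the parity of $a_i$ for the isolated slot. You are right that one must additionally use that the $a_j$ are pairwise congruent mod $\Z$ --- in context, because $\pi_v$ is a (limit of) discrete series and hence has integral infinitesimal character --- so that the parity of the single isolated $a_i$ propagates to all of them. The paper is in effect invoking this tacitly (its dichotomy in parts (1)--(2) already presupposes a global parity), so you have filled in a step rather than changed the argument.
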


\begin{proof}
Writing $r(j)=(A \Phi_n^{-1})c$ the semi-direct product relation implies $r(j^2)=(AA^{-t}(-1)^{n-1})\1$.

Since $j^2=-1$ we see that $A=A^t$ if $\pi$ is $C$-algebraic (since $a_i \in \Z+\frac{n-1}{2}$ we have $r(-1)=(-1)^{n-1}$) and $A=(-1)^{n-1}A^t$ if $\pi$ is $L$-algebraic (since $a_i \in \Z$).

For $(3)$ we note that the relationship $jzj\ii=\overline z$ implies
$$Ar(z)=r(z)A.$$
Assuming, without loss of generality, that $a_1 \neq a_2$ this shows that $A$ is of the form $$A=\begin{pmatrix}A_1&0&*\\0&A_2&*\\ *&*&*\end{pmatrix}.$$ In particular, it cannot satisfy $A^t=-A$.  
\end{proof}

\subsection{Galois representations attached to automorphic representations of $\U(a,b)$}


Let $F$ be a CM field, i.e. $F$ is a totally imaginary quadratic extension of a totally real subfield $F^+$, and let $\pi$ be a cuspidal automorphic representation of $\U(a,b)(\A_{F^+})$. In this subsection, we recall results  associating $\Ga F$-representations to $\pi$.

Let $H = \Res_{F/F^+} (\GL_n)$. For an automorphic representation $\pi$, let $\chi_{\pi}$ denote its central character. For every place $v \notin \mathrm{Ram}(\pi)$ and $w$ a place of $F$ above $v$, define the base change of $\pi_v$ to $H$, denoted $\mathrm{BC}(\pi_v)$, and its $w$-part $\mathrm{BC}(\pi_v)_w$ as in \cite[Section 1.3]{HLTT}. Write $\rec_{F_w}$ for the (unramified) local Langlands
correspondence, normalized as in \cite{harris-taylor}.

The following theorem is the work of many people; for a reference see e.g. \cite{HLTT} Corollary 1.3 or \cite{Skinner} (but we state a version over general CM fields for the  regular discrete series case covered by \cite{Shin}, which only requires Labesse’s restricted base change):

\begin{theorem}[Clozel, Harris, Taylor, Labessse, Morel, Shin]\label{Gal-reps}
	Let $\pi$ be a  cuspidal automorphic representation of $\U(a,b)(\A_{F^+})$. Let $S$ be the set of primes of $F$ lying above rational primes at which $F$ and $\pi$ are unramified. Suppose that, for each archimedean place $v$ of $F^+$, $\pi_v$ is a regular discrete series representation.
There exists a compatible system of $\l$-adic Galois representations
		$$\rho_{\pi}:\Ga F\to \GL_n(\Qlb)$$ 
		such that
		$$\rho_{\pi}^c \simeq \rho_{\pi}\dual\tensor\epsilon^{1-n}$$ 
		and such that
		$$(\rho_{\pi}|_{W_{F_w}})^{\mathrm{ss}} \cong \rec_{F_w}(\mathrm{BC}(\pi_v)_w \otimes | \cdot |_w^{\frac{1-n}{2}})$$ for $w \notin S$ and $w \mid v$. These representations are de Rham for primes above $\ell$.
\end{theorem}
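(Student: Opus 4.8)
The plan is to reduce the statement to the existence theorem for Galois representations attached to regular algebraic conjugate self-dual cuspidal automorphic representations of $\GL_n$ over a CM field, assembled from work of Kottwitz, Clozel, Harris, Taylor, Labesse, Morel and, for the regular discrete series case over a general CM field, Shin \cite{Shin}. First I would base change: by Labesse's restricted base change for unitary groups there is an isobaric automorphic representation $\Pi = \mathrm{BC}(\pi)$ of $H(\A_{F^+}) = \GL_n(\A_F)$ whose component at each place $w$ above a prime of $S$ is the unramified base change $\mathrm{BC}(\pi_v)_w$, and whose archimedean components are the base changes of the $\pi_v$. Since $\pi$ lives on the unitary group, $\Pi$ is conjugate self-dual, $\Pi^c \cong \Pi\dual$ (up to a twist by the base change of the central character, which I suppress); and since each $\pi_v$ is a regular discrete series representation, it is $C$-algebraic by \Cref{weakreg}(3) with regular infinitesimal character, so $\Pi_\infty$ is regular algebraic.

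Next I would decompose $\Pi = \Pi_1 \boxplus \cdots \boxplus \Pi_r$ into cuspidal automorphic representations $\Pi_i$ of $\GL_{n_i}(\A_F)$, each regular algebraic and conjugate self-dual --- when a constituent fails to be conjugate self-dual one works instead with $\Pi_i \boxplus (\Pi_i\dual)^c$, following Harris--Lan--Taylor--Thorne \cite{HLTT} --- and invoke Shin's construction \cite{Shin}, realised in the $\ell$-adic cohomology of the Shimura varieties of a unitary similitude group, to obtain for each $i$ a continuous representation $\rho_{\Pi_i}:\Ga F \to \GL_{n_i}(\Qlb)$ that is de Rham above $\ell$ and satisfies $(\rho_{\Pi_i}|_{W_{F_w}})^{\mathrm{ss}} \cong \rec_{F_w}(\Pi_{i,w} \tensor |\cdot|_w^{\frac{1-n}{2}})$ for $w \notin S$. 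It is exactly here that only Labesse's \emph{restricted} base change is needed over a general CM field, and correspondingly why local--global compatibility can be asserted only away from $S$.

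Finally I would set $\rho_\pi = \bigoplus_{i=1}^{r} \rho_{\Pi_i}$. Local--global compatibility at $w \notin S$ then follows by summing the statements for the $\Pi_i$ and using that $\rec_{F_w}$ and base change are additive along isobaric sums, so that the right-hand side is $\rec_{F_w}(\mathrm{BC}(\pi_v)_w \tensor |\cdot|_w^{\frac{1-n}{2}})$; the relation $\rho_\pi^c \simeq \rho_\pi\dual \tensor \epsilon^{1-n}$ follows from $\Pi^c \cong \Pi\dual$, the cyclotomic twist recording the passage from the cohomological normalisation to the arithmetic one; and de Rham-ness above $\ell$, together with the compatible-system property, passes to direct sums, the latter because each $\rho_{\Pi_i}$ is cut out of the cohomology of an algebraic variety and so has Frobenius characteristic polynomials at $w \notin S$ independent of $\ell$.

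The genuine content, and the step I expect to be the main obstacle, is the base change: Labesse's transfer is only weak, pinning down $\Pi_w$ solely for $w$ split, unramified, or archimedean, so ramified local--global compatibility is out of reach and the conclusion must be stated away from $S$; and one must handle with care the descent from the isobaric $\Pi$ to its cuspidal constituents, including which twist of conjugate self-duality each $\Pi_i$ carries and the pairing device for any non-self-dual constituents. Once $\Pi$ and its constituents are in hand, the construction of the $\rho_{\Pi_i}$ and the verification of their properties is the work of Shin, building on the authors cited above, and requires no new ideas.
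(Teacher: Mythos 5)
The paper does not prove \Cref{Gal-reps}; it states it as a citation theorem and refers to \cite{HLTT} Corollary 1.3, \cite{Skinner}, and \cite{Shin}, with the explicit remark that the regular discrete series case over a general CM field needs only Labesse's restricted base change. Your sketch reconstructs exactly that chain of results and correctly identifies the one subtlety the authors themselves flag --- that regular discrete series at infinity is what lets one get away with Labesse's \emph{restricted} transfer over an arbitrary CM field, and that this is also why local--global compatibility is only asserted away from $S$. So you have essentially reproduced the intended reference proof.

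One small point worth tightening: you invoke \Cref{weakreg}(3) to deduce that $\Pi_\infty$ is regular algebraic, but $C$-algebraicity of $\pi_v$ on $\U(a,b)$ translates, after base change to $\GL_n(\A_F)$, into $\Pi$ being regular algebraic only up to the half-integral twist $|\det|^{(n-1)/2}$ (equivalently, $\Pi$ is $C$-algebraic on $\GL_n$, which is the cohomological normalisation entering Shin's construction); it is precisely this twist that produces the $\epsilon^{1-n}$ and the $|\cdot|_w^{(1-n)/2}$ in the statement, and your appeal to "the passage from the cohomological normalisation to the arithmetic one" covers it, but it would be cleaner to say regular and $C$-algebraic rather than regular algebraic. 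Beyond that, your handling of the isobaric decomposition, the pairing device for non-conjugate-self-dual constituents, and the passage of the compatible-system and de Rham properties to direct sums all match the standard references the theorem is attributed to.
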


For imaginary CM fields, stronger local-global compatibility statements can be proved, see \cite{Shah} Theorem 2.2.
 
Using Theorem \ref{Gal-reps} \cite{GoldringKoskivirta} prove the following result (a similar result is proved by Pilloni-Stroh \cite{Pilloni-Stroh}):
\begin{theorem}[\cite{GoldringKoskivirta} Theorem 10.5.3]\label{GKPS}
	Let $\pi$ be a $C$-algebraic cuspidal automorphic representation of $\U(a,b)(\A_{F^+})$. Let $S$ be the set of primes of $F$ lying above rational primes at which $F$ and $\pi$ are unramified. Suppose that, for each archimedean place $v$ of $F^+$, $\pi_v$ is a non-degenerate limit of discrete series.
 Then, for each prime $\l$ at which $\pi$ is unramified, there exists an $\l$-adic Galois representation
		$$\rho_{\pi}:\Ga F\to \GL_n(\Qlb)$$
		such that
		$$\rho_{\pi}^c \simeq \rho_{\pi}\dual\tensor\epsilon^{1-n}$$
		and such that
		$$(\rho_{\pi}|_{W_{F_w}})^{\mathrm{ss}} \cong \mathrm{rec}_{F_w}(\mathrm{BC}(\pi_v)_w \otimes | \cdot |_w^{\frac{1-n}{2}})$$ for $w \notin S$.
\end{theorem}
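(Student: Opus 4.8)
The strategy is to reduce to the regular case, Theorem~\ref{Gal-reps}: one approximates $\pi$ $\ell$-adically by regular discrete series automorphic representations, attaches Galois representations to these by Theorem~\ref{Gal-reps}, and passes to the limit at the level of (classical) pseudocharacters, checking that the two required properties survive because they are visible on the traces of Frobenius elements. Fix $\ell$ as in the statement.

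The first ingredient is that a non-degenerate limit of discrete series representation of $\U(a,b)(\R)$ contributes to the coherent cohomology of an automorphic vector bundle, in the degree prescribed by its archimedean parameter: concretely, the Hecke eigensystem of $\pi$ away from the ramified and archimedean places is realised in the coherent cohomology of a suitable unitary Shimura variety with coefficients in an automorphic bundle whose weight lies on a wall of the dominant cohomological cone. One then spreads this eigensystem out: for every integer $N\geq 1$ there is a cuspidal automorphic representation $\pi_N$ of $\U(a,b)(\A_{F^+})$ which is a \emph{regular} discrete series at each archimedean place, has the same ramification as $\pi$ (in particular it is unramified at $\ell$), and whose Hecke eigenvalues at the unramified places are congruent to those of $\pi$ modulo $\ell^N$, with respect to fixed integral structures. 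This is precisely the output of the Goldring--Koskivirta theory of group-theoretic (generalised) Hasse invariants on the stack of $G$-zips / the flag space: starting from a mod-$\ell^N$ version of the coherent eigenform attached to $\pi$, one multiplies by suitable powers of partial Hasse invariants to push its weight into the interior of the cohomological cone (making it regular), and a vanishing-and-lifting theorem then produces the characteristic-zero cuspidal eigenform $\pi_N$ with the same eigensystem modulo $\ell^N$; alternatively one may invoke the overconvergent construction of Pilloni--Stroh \cite{Pilloni-Stroh}. This step carries essentially all the geometric content and is the main obstacle: the delicate points are the weight combinatorics ensuring that one lands in the regular discrete-series chamber, and the preservation of cuspidality and of level away from $\ell$.

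Now apply Theorem~\ref{Gal-reps} to each $\pi_N$ (a regular discrete series of $\U(a,b)$ is automatically $C$-algebraic by Lemma~\ref{weakreg}(3), so the hypotheses match), obtaining $\rho_{\pi_N}\colon\Ga F\to\GL_n(\overline{\Q}_\ell)$ with $\rho_{\pi_N}^c\simeq\rho_{\pi_N}\dual\tensor\epsilon^{1-n}$ and with $(\rho_{\pi_N}|_{W_{F_w}})^{\mathrm{ss}}\cong\mathrm{rec}_{F_w}(\mathrm{BC}(\pi_{N,v})_w\otimes|\cdot|_w^{\frac{1-n}{2}})$ at the unramified places $w$ (which, since $\pi_N$ and $\pi$ have the same ramification, are the same for all $N$ and include every $w\notin S$). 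After conjugation we may assume each $\rho_{\pi_N}$ is valued in $\GL_n(\overline{\Z}_\ell)$; as the eigensystems of the $\pi_N$ are all congruent modulo $\ell$, the semisimplifications of their reductions are isomorphic to a single $\overline{\rho}$. By Chebotarev density and the congruences of the previous paragraph, the functions $\gamma\mapsto\mathrm{tr}\,\rho_{\pi_N}(\gamma)$ become $\ell$-adically closer as $N\to\infty$, and the standard limit argument produces a continuous pseudocharacter $T\colon\Ga F\to\overline{\Q}_\ell$ of dimension $n$, unramified outside $S$, with $T(\mathrm{Frob}_w)=\lim_N\mathrm{tr}\,\rho_{\pi_N}(\mathrm{Frob}_w)$ for $w\notin S$. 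Taylor's theorem \cite{taylor1991galois} then furnishes a continuous semisimple representation $\rho_\pi\colon\Ga F\to\GL_n(\overline{\Q}_\ell)$, unramified outside $S$, with $\mathrm{tr}\,\rho_\pi=T$.

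It remains to verify the two asserted properties, and both are detected by $T$. For local-global compatibility, the characteristic polynomial of $\rho_\pi(\mathrm{Frob}_w)$ for $w\notin S$ is the $\ell$-adic limit of those of $\rho_{\pi_N}(\mathrm{Frob}_w)$, equivalently of $\mathrm{rec}_{F_w}(\mathrm{BC}(\pi_{N,v})_w\otimes|\cdot|_w^{\frac{1-n}{2}})(\mathrm{Frob}_w)$; since the Satake parameters of $\pi_{N,v}$ converge $\ell$-adically (via the fixed isomorphism $\overline{\Q}_\ell\cong\C$) to those of $\pi_v$, we obtain $(\rho_\pi|_{W_{F_w}})^{\mathrm{ss}}\cong\mathrm{rec}_{F_w}(\mathrm{BC}(\pi_v)_w\otimes|\cdot|_w^{\frac{1-n}{2}})$. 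For conjugate self-duality, $\rho_\pi^c$ and $\rho_\pi\dual\tensor\epsilon^{1-n}$ are both semisimple and, for every $\gamma$, one has $\mathrm{tr}\,\rho_{\pi_N}^c(\gamma)=\mathrm{tr}(\rho_{\pi_N}\dual\tensor\epsilon^{1-n})(\gamma)$ for all $N$; passing to the limit gives $\mathrm{tr}\,\rho_\pi^c=\mathrm{tr}(\rho_\pi\dual\tensor\epsilon^{1-n})$, whence $\rho_\pi^c\simeq\rho_\pi\dual\tensor\epsilon^{1-n}$ by Brauer--Nesbitt. Apart from the congruence construction, all the steps are formal manipulations with pseudocharacters.
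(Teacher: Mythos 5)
This theorem is a black-box citation in the paper: it is attributed to Goldring--Koskivirta \cite{GoldringKoskivirta} (Theorem 10.5.3), with a parallel result by Pilloni--Stroh \cite{Pilloni-Stroh}, and the paper supplies no proof of its own. Your sketch is a correct and faithful reconstruction of the cited argument, and it coincides with the strategy the paper itself redeploys in the proof of Theorem~\ref{main-thm-intro}, where the only substantive change is that Taylor pseudocharacters are replaced by Lafforgue $\CU$-pseudocharacters in order to track the extension to $\GalF$ and the sign at infinity (information that the classical pseudocharacter you use cannot see, but which is also not claimed in Theorem~\ref{GKPS}).

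One presentational remark. You phrase the congruence step as producing, for each $N$, a single regular discrete-series eigenform $\pi_N$ with eigenvalues congruent to those of $\pi$ modulo $\ell^N$. The paper's own description of the Goldring--Koskivirta mechanism (in the proof of Theorem~\ref{main-thm-intro}) is slightly more structural: one gets a Hecke algebra $\T_m$ in regular weight, a Galois representation $\rho_m$ valued in $\GL_n(\T_m\otimes\Qlb)$, and a factorisation $r_m\colon\T_m\to\O_E/\ell^m$ of the reduction of $\pi$'s eigensystem; the limit is then taken at the level of pseudocharacters valued in $\T_m\otimes\Zlb$ composed with $r_m$. Your formulation (choosing a characteristic-zero eigenform $\pi_N$ above $r_N$ by going-up in the finite flat $\Zlb$-algebra $\T_N$) is equivalent, but the Hecke-algebra formulation is what makes the boundedness and compatibility of the tower of pseudocharacters transparent. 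Otherwise your verification of the two asserted properties (essential conjugate self-duality and unramified local-global compatibility) by passage to the limit on traces of Frobenius, Chebotarev, and Brauer--Nesbitt is exactly the standard argument, and is correct.
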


\begin{remark}\label{mult2}
	Note that the condition that $\pi$ is $C$-algebraic is often satisfied automatically. Indeed, by \cite[Section 10.5.3]{GoldringKoskivirta}  non-degenerate limits of discrete series correspond to Langlands parameters as in Lemma \ref{weakreg}, with parameters $a_i$ of multiplicity at most two (whereas discrete series have all $a_i, i=1, \ldots n$ distinct). By Lemma \ref{weakreg}, these representations are automatically $C$-algebraic unless each $a_i$ has multiplicity exactly $2$. 
\end{remark}

\subsection{Polarised Galois representations and the Bella\"iche--Chenevier sign}

In the previous section, we recalled the existence of Galois representations
$$\rho:\Ga F\to \GL_n(\Qlb)$$
attached to automorphic representations $\pi$ of $\U(a,b)$. In this subsection, we show how to lift these representations to representations
$$R:\GalF\to\CU(\Qlb)$$
and we relate the image of complex conjugation elements under $R$ to the Bella\"iche--Chenevier sign of $\rho$. 

\subsubsection{Polarised Galois representations}

We begin by recalling the notion of a polarised Galois representation of $\Ga F$.

\begin{definition}
	A \emph{polarised} $\l$-adic Galois representation of $\Ga F$ is a triple $(\rho, \chi, \langle\cdot,\cdot\rangle)$, where
	\begin{itemize}
		\item $\rho:\Ga F\to \GL_n(\Qlb)$ is a Galois representation;
		\item $\chi: \GalF\to \Qlb\t$ is a Galois character;
		\item $\langle\cdot,\cdot\rangle$ is a pairing on $\Qlb^n$
	\end{itemize}
	such that for all $x,y \in \Qlb^n$:
	\begin{itemize}
		\item $\langle x, y \rangle = -\chi(c)\langle y, x\rangle$, where $c$ is a choice of complex conjugation in $\GalF$.
		\item $\langle \rho(g)x, \rho^{c}(g)y \rangle = \chi(g)\langle x, y\rangle$ for all $g\in \Ga F$.
	\end{itemize}
\end{definition}

If $(\rho, \chi, \langle\cdot,\cdot\rangle)$ is a polarised Galois representation, then there is a matrix $A\in\GL_n$, such that, for all $x, y\in \Qlb^n$,
$$\langle x, y\rangle = x^t A\ii y.$$
Since $\langle \rho(g)x, \rho^{c}(g)y \rangle = \chi(g)\langle x, y\rangle$ for all $g\in \Ga F$, we see that
$$\rho^c = A\rho\dual A\ii\chi,$$
so that $\rho$ is conjugate self-dual. Moreover, the condition that $\langle x, y \rangle = -\chi(c)\langle y, x\rangle$, where $c$ is the non-trivial element of $\Gal(F/F^+)$, ensures that $x^tA\ii y =-\chi(c) x^t A\iit y$.
Since $x, y\in\Qlb^n$ were arbitrary, we see that $A = -\chi(c)A^t$. We call $-\chi(c)$ the \emph{sign} of $(\rho, \chi, \langle\cdot,\cdot\rangle)$. If $\rho$ is irreducible, then this sign is exactly the Bella\"iche--Chenevier sign.

If $\rho:\Ga F\to \GL_n(\Qlb)$ is an irreducible, essentially conjugate self-dual Galois representation, then there is a natural way to extend $\rho$ to a polarised Galois representation. Indeed, there is a matrix $A$, unique up to scalar multiplication, such that $\rho^c = A\rho\dual A\ii \chi$ and such that $A = \lambda A^t$ where $\lambda=\pm1$. Since $\chi = \chi^c$, $\chi$ extends to a character of $\GalF$, and we choose this extension so that $\chi(c) = -\lambda$. If we define a pairing $\langle\cdot,\cdot\rangle$ on $\Qlb$ using $A\ii$, then $(\rho, \chi, \langle\cdot,\cdot\rangle)$ is a polarised Galois representation.

More generally, if $\rho$ is semisimple and every irreducible subrepresentation $r$ of $\rho$ such that $r^c\simeq r\tensor \chi$ has sign $\lambda$, then there is still a choice of polarisation for $\rho$. Indeed, we can write
$$\rho = \bigoplus_i r_i \bigoplus_j s_j \+ (s_j^c)\dual\chi,$$
where the $r_i$ are conjugate self-dual with sign $\lambda$. We can define a polarisation on each $r_i$ as before. For each $j$, if $\dim(s_j)=n_j$, then the matrix
$$\begin{pmatrix}
&I_{n_j}\\I_{n_j}&
\end{pmatrix}$$ 
defines an invariant pairing on the conjugate self-dual representation $s_j \+ (s_j^c)\dual\chi$. Taking the direct sum of these polarised Galois representations gives a polarisation of $\rho$ with the correct sign.

\begin{remark}\label{distinct-warning}
	In general, the converse of this construction fails. Given a polarised Galois representation $(\rho, \chi, \langle\cdot,\cdot\rangle)$ with sign $\lambda$, it is \emph{not} true in general that every conjugate self-dual subrepresentation of $\rho$ has Bella\"iche--Chenevier sign $\lambda$. For example, if $r$ is a conjugate self-dual Galois representation with sign $-1$, then we can define two polarisations on $\rho = r\+ r$ with different signs. Indeed, if $r^c = Br B\ii\chi$ with $B= -B^t$, then let
	$$A_1 = \begin{pmatrix}
	B&\\&B
	\end{pmatrix}$$
	and 
	$$A_2 = \begin{pmatrix}
	&-B\\B&
	\end{pmatrix}.$$
	Then the pairing $\langle x, y\rangle  = x^tA_1\ii y$ has sign $-1$, while the pairing $\langle x, y\rangle  = x^tA_2\ii y$ has sign $1$. 
\end{remark}

Nevertheless, given a polarised Galois representation with sign $\lambda$, any subrepresentation $r$ of $\rho$ that is conjugate self-dual and appears with multiplicity $1$ in the decomposition of $\rho$ will have sign $\lambda$. We record this fact in the following lemma.

\begin{lemma}\label{polarisation-determines-sign}
	Let $(\rho, \chi, \langle\cdot,\cdot\rangle)$ be a polarised Galois representation with sign $\lambda$. Suppose that $r$ is an irreducible subrepresentation of $\rho$ that appears with multiplicity $1$ in the decomposition of $\rho$ and such that $r^c\simeq r\dual\tensor \chi$. Then $r$ has sign $\lambda$.
\end{lemma}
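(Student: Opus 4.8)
The plan is to decompose $\rho$ into isotypic components and restrict the pairing $\langle\cdot,\cdot\rangle$ to the $r$-isotypic part, which by hypothesis is just a single copy of $r$. The key point is that the pairing must be compatible with the isotypic decomposition. Let me spell this out.

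First I would write $\rho = \bigoplus_i m_i \tau_i$ as a direct sum of isotypic components, where the $\tau_i$ are pairwise non-isomorphic irreducibles and $m_i$ are the multiplicities; by hypothesis $r = \tau_{i_0}$ for some $i_0$ with $m_{i_0}=1$. The pairing $\langle\cdot,\cdot\rangle$ on $V = \Qlb^n$ satisfies $\langle \rho(g)x,\rho^c(g)y\rangle = \chi(g)\langle x,y\rangle$ for $g\in\Ga F$. Equivalently, writing $\langle x,y\rangle = x^t A\ii y$, we have $\rho^c = A\rho\dual A\ii\chi$, i.e. $A\ii$ intertwines $\rho^c$ with $(\rho\dual)\tensor\chi$. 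Now for a subspace $W\subseteq V$ stable under $\rho$, its annihilator $W^{\perp} = \{y : \langle x,y\rangle = 0 \text{ for all } x\in W\}$ is stable under $\rho^c\tensor\chi\ii$, hence under $\rho$ (since $\rho^c\tensor\chi\ii \simeq (\rho\dual)\dual = \rho$ up to the appropriate twist — more precisely $W^\perp$ is $\rho$-stable because the pairing is $\rho$-vs-$\rho^c$ equivariant). The upshot I want is: the pairing between the $\tau_i$-isotypic component and the $\tau_j$-isotypic component vanishes unless $\tau_j \simeq \tau_i^c{}\dual\tensor\chi$. Since $r^c\simeq r\dual\tensor\chi$, the $r$-isotypic component $V_{i_0}$ (a single copy of $r$) pairs nontrivially only with itself among the isotypic pieces; and the restriction of $\langle\cdot,\cdot\rangle$ to $V_{i_0}$ is nondegenerate because $\langle\cdot,\cdot\rangle$ is nondegenerate on all of $V$ and $V_{i_0}$ pairs trivially with the sum of all other isotypic components (which is a complement to $V_{i_0}$).

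Next I would observe that the restriction $(r, \chi|_{\GalF}, \langle\cdot,\cdot\rangle|_{V_{i_0}})$ is a polarised Galois representation in its own right: the pairing is nondegenerate on $V_{i_0}$, it is $r$-vs-$r^c$ equivariant with multiplier $\chi$, and it inherits the symmetry $\langle x,y\rangle = -\chi(c)\langle y,x\rangle$ from $\rho$. Hence the sign of this polarised representation equals $-\chi(c) = \lambda$. But $r$ is irreducible, so by the discussion preceding the lemma (Schur's lemma forces the intertwiner $A$ with $A = \pm A^t$, and the sign is well-defined), the sign of any polarisation of $r$ equals the Bella\"iche--Chenevier sign of $r$. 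Therefore $r$ has sign $\lambda$, as claimed.

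The main obstacle is the orthogonality claim: making precise that $\langle\cdot,\cdot\rangle$ respects the isotypic decomposition, i.e. that $V_{i_0}^{\perp}$ is precisely the sum of the other isotypic components. This follows from Schur's lemma — $\operatorname{Hom}(\tau_i, \tau_j^c{}\dual\tensor\chi)$ is zero unless $\tau_j^c{}\dual\tensor\chi \simeq \tau_i$ — but one has to be slightly careful because $\rho$ is a representation of $\Ga F$ while $\rho^c$ involves the outer conjugation, so the relevant intertwining category is that of $\Ga F$-representations with the twisted action; since $r$ appears with multiplicity one and $r^c\dual\tensor\chi\simeq r$, the copy of $r$ in $V$ is forced to pair with itself and this pairing is nondegenerate. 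Everything else is bookkeeping with Schur's lemma, which I would not belabour.
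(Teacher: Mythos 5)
Your proof is correct and takes essentially the same approach as the paper: decompose $\rho$ into irreducibles, observe that the pairing (equivalently the matrix $A$) respects the decomposition and, since $r$ appears with multiplicity one and is essentially conjugate self-dual, restricts to a nondegenerate pairing on the copy of $r$, which forces $r$ to have sign $\lambda$. The paper's version phrases this more tersely in matrix/block-diagonal language ("$A$ permutes the $r_i$'s, so there is a submatrix $A_r$ with $A_r = \lambda A_r^t$"), while you spell out the Schur's-lemma orthogonality between isotypic components and the nondegeneracy of the restricted pairing more explicitly; the content is the same.
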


\begin{proof}
	Write $\rho = \bigoplus_i r_i \bigoplus_j s_j \+ (s_j^c)\dual\chi$,
	where the $r_i$ are conjugate self-dual. Write $A$ for the matrix such that $\langle x, y\rangle  = x^tA\ii y$. Then
	$$\rho^c = A\rho\dual A\ii\chi.$$
	In particular, $A$ permutes the $r_i$'s and, since $r$ has multiplicity $1$ in the decomposition of $\rho$, there must be a submatrix $A_r$ of the block diagonal of $A$ such that $r^c = A_rr\dual A_r\ii\chi$. In particular, $A_r = \lambda A_r^t$, so $r$ has sign $\lambda$.
\end{proof}

\subsubsection{Galois representations valued in $\CU(a,b)$}

Recall that $\CU = \frac{\GL_n\times\GL_1}{\langle (-I_n)^{n-1}, -1\rangle}\rtimes \GalF$, where the action of Galois is given by
$$c\cdot (g, \mu) = (\Phi_n g\iit\Phi_n\ii, \mu).$$
Moreover, recall that there is a map $d:\CU\to\GL_1$ given by
$$(g, \mu)\mapsto \mu^2,\qquad c\mapsto -1.$$
If $\pi$ is a cuspidal automorphic representation of the form considered in \Cref{Gal-reps}, and if $\pi$ is $C$-algebraic (often satisfied by Remark \ref{mult2}), then its associated Galois representation should be valued in $\CU = \CU(a,b)$.  In this subsection, we prove the following theorem, which is exactly Conjecture 5.3.4 of \cite{BuG}:

\begin{theorem}\label{c-valued-cohomological}
	Let $\pi$ be a $C$-algebraic cuspidal automorphic representation of $\U(a,b)(\A_{F^+})$ such that, for each archimedean place $v$, $\pi_v$ lies in the discrete series. Then, for each prime $\l$ at which $\pi$ is unramified, there exists a continuous Galois representation
	$$R_{\pi}:\GalF\to \CU(\Qlb)$$
	such that:
	\begin{enumerate}
		\item The composition of  $R_{\pi}$ with the projection $\CU(\Qlb) \to \GalF$ is the identity.
		\item The composition of $R_{\pi}$ with the map $d:\CU\to\GG_m$ is the cyclotomic character $\epsilon$.
		\item $R_{\pi}$ satisfies local-global compatibility at unramified primes: for each place $v$ of $F^+$ lying over a rational prime $p\ne \l$ at which both $F$ and $\pi$ are unramified, the local representation
$(R_{\pi}|_{W_{F^+_v}})^{\mathrm{ss}}$ is $\widehat{\widetilde{\U(a,b)}}$-conjugate to the representation sending $w\in W_{F^+_v}$ to $r_{\pi_v}(w) \hat{\xi}(|w|^{1/2})$, where $r_{\pi_v}$ is the local Langlands correspondence normalised as in section 2.2 of \cite{BuG} and $\hat{\xi}$ is the map $\C\t \to \widehat{\widetilde{\U(a,b)}}$, defined in \Cref{twisting}.
		\item For any complex conjugation $c$ the image $R(c)$ is $\widehat{{\U(a,b)}}(\Qlb)$-conjugate to $(\Phi_n^{-1},(-1)^{1/2}) c$.
	\end{enumerate}
\end{theorem}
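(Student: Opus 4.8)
The plan is to deduce the theorem from Theorem \ref{Gal-reps}, the oddness theorem of Bella\"iche--Chenevier, and the dictionary between polarised Galois representations and $\CU(a,b)$-valued lifts recorded in \Cref{polarisable-sign-bijection}. I would begin by applying Theorem \ref{Gal-reps}: since each $\pi_v$ is a discrete series representation, it produces a Galois representation $\rho_{\pi}:\Ga F\to\GL_n(\Qlb)$, which we may take to be semisimple, with $\rho_{\pi}^c\simeq\rho_{\pi}\dual\tensor\epsilon^{1-n}$, with local-global compatibility at the unramified primes, and which is de Rham above $\l$. The discrete series hypothesis forces the infinitesimal character of $\pi$, hence that of its base change $\mathrm{BC}(\pi)$ to $\GL_n(\A_F)$, to be regular; thus each irreducible constituent of $\rho_{\pi}$ is the Galois representation attached to a regular algebraic, (essentially) conjugate self-dual cuspidal automorphic representation of some $\GL_{n_i}(\A_F)$. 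By \cite[Theorem 1.2]{BCh}, every irreducible subrepresentation $r$ of $\rho_{\pi}$ with $r^c\simeq r\dual\tensor\epsilon^{1-n}$ has Bella\"iche--Chenevier sign $+1$; feeding the decomposition of $\rho_{\pi}$ into the construction of a polarisation from the conjugate self-dual constituents recalled above then produces a polarised Galois representation $(\rho_{\pi},\chi,\langle\cdot,\cdot\rangle)$ of sign $+1$, that is, a matrix $A=A^t\in\GL_n(\Qlb)$ with $\rho_{\pi}^c=A\rho_{\pi}\dual A\ii\chi$, where $\chi$ restricts to $\epsilon^{1-n}$ on $\Ga F$ and $\chi(c)=-1$.

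Next I would feed $(\rho_{\pi},\chi,\langle\cdot,\cdot\rangle)$ into \Cref{polarisable-sign-bijection} to obtain a continuous homomorphism $R_{\pi}:\GalF\to\CU(a,b)(\Qlb)$ satisfying (1), (2) and (4). On $\Ga F$ the homomorphism $R_{\pi}$ is assembled from $\rho_{\pi}$ together with a square root $\mu$ of $\epsilon$; such a datum exists not as a character of $\Ga F$ but as a well-defined map into $\widehat{\widetilde{\U(a,b)}}$, precisely because $\pi$ is $C$-algebraic and the ambiguous sign is killed in the quotient by $\langle((-I_n)^{n-1},-1)\rangle$ --- this is exactly why $R_{\pi}$ must be valued in the $C$-group rather than the $L$-group, and it gives (2), since $d\circ R_{\pi}=\mu^2=\epsilon$. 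Over a complex conjugation $c$ one arranges, using the semidirect-product relation of \Cref{weakreg} together with the twisting cocharacter $\theta=\delta+\frac12\hat\xi$ (whose $\GL_1$-coordinate supplies the factor $(-1)^{1/2}$), that $R_{\pi}(c)$ is $\widehat{\U(a,b)}(\Qlb)$-conjugate to $(\Phi_n\ii,(-1)^{1/2})c$: conjugating $(\Phi_n\ii,(-1)^{1/2})c$ by $B\in\GL_n(\Qlb)$ produces $(BB^t\Phi_n\ii,(-1)^{1/2})c$, so this is possible exactly because the symmetry $A=A^t$ lets one write $A=BB^t$. That symmetry is the image under \Cref{weakreg}(1) of the $C$-algebraicity of $\pi$, matched with the oddness established in the previous paragraph; this is the only point at which oddness enters.

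It remains to verify local-global compatibility at the finite unramified primes, condition (3). For a place $v$ of $F^+$ over a rational prime $p\ne\l$ at which both $F$ and $\pi$ are unramified, I would compute $(R_{\pi}|_{W_{F^+_v}})^{\mathrm{ss}}$ coordinatewise: its $\GalF$-coordinate is the structure map $\CU(a,b)\to\GalF$; its $\GL_1$-coordinate is $\mu^2=\epsilon$; and, for $w\mid v$, its $\widehat{\U(a,b)}=\GL_n$-coordinate is $(\rho_{\pi}|_{W_{F_w}})^{\mathrm{ss}}=\rec_{F_w}(\mathrm{BC}(\pi_v)_w\tensor|\cdot|_w^{\frac{1-n}{2}})$ by Theorem \ref{Gal-reps}. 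I would then identify this, treating $w$ split and $w$ inert over $v$ separately, with the restriction to $W_{F_w}$ of $w'\mapsto r_{\pi_v}(w')\hat\xi(|w'|^{1/2})$: this comes down to matching the unramified local Langlands correspondences for $\U(a,b)$ and for $\GL_n$ through base change, and reconciling the shift $|\cdot|^{\frac{1-n}{2}}$ of Theorem \ref{Gal-reps} with the $C$-algebraic shift $\hat\xi(|\cdot|^{1/2})$; in the inert case one uses in addition that $r_{\pi_v}$ is conjugate self-dual with the correct multiplier, so that its twisted restriction to the index-two subgroup $W_{F_w}$ is stable under the Galois action $g\mapsto\Phi_n g\iit\Phi_n\ii$ defining $\CU(a,b)$.

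I expect this last step to be the main obstacle --- the normalisation bookkeeping that simultaneously reconciles the $\epsilon^{1-n}$ of Theorem \ref{Gal-reps}, the base change between $\U(a,b)$ and $\GL_n$, and the $C$-algebraic twist $\hat\xi(|\cdot|^{1/2})$, uniformly at the split and inert places --- together with the verification that the square-root datum $\mu$ really does globalise after passage to $\widehat{\widetilde{\U(a,b)}}$. By comparison, the oddness input from \cite{BCh} and the formal passage from an odd polarised representation to a $\CU(a,b)$-valued lift via \Cref{polarisable-sign-bijection} are, once the latter is in hand, routine.
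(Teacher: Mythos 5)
Your proposal is correct and follows essentially the same route as the paper: obtain a totally odd polarised Galois representation from Theorem \ref{Gal-reps} together with Bella\"iche--Chenevier, lift it to $\CU(a,b)$ via Proposition \ref{polarisable-sign-bijection}, use the fact that a nondegenerate symmetric $A$ can be written $BB^t$ to normalise $R_{\pi}(c)$ to $(\Phi_n^{-1},(-1)^{1/2})c$, and check local-global compatibility at unramified primes by comparing parameters through base change (the paper handles both split and inert places uniformly by invoking Mok's Lemma 2.2.1 and the parity computation from (2.2.4)--(2.2.5), which is the precise form of the bookkeeping you flagged as the main remaining obstacle).
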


The theorem follows from the following proposition, which is essentially a combination of \cite[Lemma 2.1.1]{CHT} and \cite[Section 8.3]{BuG}.

\begin{proposition}\label{polarisable-sign-bijection}
Let $k$ be a ring that is closed under taking square roots. Let $\chi:\GalF\to\GL_1(k)$ be a character. There is a bijection between:
\begin{enumerate}
	\item Isomorphism classes of representations 
	$$R: \GalF\to \CU(k)$$
	taken up to $\widehat{G}$ conjugacy, such that the composite of $R$ and the projection onto $\GalF$ is the identity and such that $d\circ R= \chi$.
	\item Isomorphism classes of polarised Galois representations $(\rho, \chi^{n-1}, \langle\cdot,\cdot\rangle)$.
\end{enumerate}
Moreover, $(\rho, \chi^{1-n}, \langle\cdot,\cdot\rangle)$ has sign $-\mu_c^2$ if and only if $R(c)$ has a representative of the form $(A\Phi_n\ii, \mu_c)c$, where $A \in \GL_n(k)$ defines the pairing $\langle\cdot,\cdot\rangle$.
\end{proposition}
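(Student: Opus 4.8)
The plan is to construct the bijection explicitly in both directions and then track the sign through the construction. Given a representation $R:\GalF\to\CU(k)$ as in (1), I would first lift it to a representation valued in $\GL_n(k)\times\GL_1(k)$ suitably; concretely, for $\gamma\in\Ga F$ write $R(\gamma)=(\rho(\gamma),\mu(\gamma))$ and check that $\gamma\mapsto\rho(\gamma)$ is a homomorphism $\Ga F\to\GL_n(k)$ (well-defined because the ambiguity in the $\GL_n$-component coming from the central quotient $\langle(-I_n)^{n-1},-1\rangle$ can be absorbed, using that $k$ admits square roots, into a consistent choice). The condition $d\circ R=\chi$ forces $\mu(\gamma)^2=\chi(\gamma)$, and one deduces from $R$ being a homomorphism into the semidirect product that $\det\rho=\mu^2\cdot(\text{something})$, pinning down the relation $\chi^{n-1}$ appearing in (2). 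Picking a complex conjugation $c$ and writing $R(c)=(B,\mu_c)c$, the relation $R(c)^2=R(c^2)$ together with the explicit Galois action $c\cdot(g,\mu)=(\Phi_n g\iit\Phi_n\ii,\mu)$ gives $B\Phi_n\ii B\iit\Phi_n = \rho(c^2)$ and $\mu_c^2=\chi(c)$ (up to the central identification), while conjugating $R(\gamma)$ by $R(c)$ for $\gamma\in\Ga F$ yields $\rho^c(\gamma)=B\Phi_n\ii\,\rho(\gamma)\iit\,\Phi_n B\ii$. Setting $A=B\Phi_n\ii$ — so that $R(c)=(A\Phi_n\ii,\mu_c)c$ in the notation of the statement — this becomes exactly $\rho^c=A\rho\dual A\ii\chi^{n-1}$ after carefully inserting the $\chi$-twist (which enters because the $\GL_1$-factor is nontrivial); defining $\langle x,y\rangle=x^tA\ii y$ then produces a candidate polarised representation, and one must verify the antisymmetry condition $\langle x,y\rangle=-\chi^{n-1}(c)\langle y,x\rangle$, i.e. $A=-\chi^{n-1}(c)A^t$, from the relation governing $R(c)^2$.

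In the reverse direction, given a polarised $(\rho,\chi^{n-1},\langle\cdot,\cdot\rangle)$ with defining matrix $A$, I would define $R$ on $\Ga F$ by $\gamma\mapsto(\rho(\gamma),\mu(\gamma))$ where $\mu$ is a square root of $\chi$ (available since $k$ is closed under square roots; any two choices differ by a quadratic character, which I would note accounts for no change up to $\widehat G$-conjugacy after composing appropriately, or is rigidified by a normalization), and set $R(c)=(A\Phi_n\ii,\mu_c)c$ for a square root $\mu_c$ of $\chi^{n-1}(c)$ chosen compatibly with the antisymmetry sign. The bulk of the verification is that this $R$ is a well-defined homomorphism into $\CU(k)$: one checks multiplicativity on $\Ga F$, the twisted relation when multiplying across $c$ (using $\rho^c=A\rho\dual A\ii\chi^{n-1}$), and that $R(c)^2=R(c^2)$ lands correctly in the central quotient — this last point is where $A=-\chi^{n-1}(c)A^t$ gets used, since $R(c)^2$ produces the factor $AA\iit=(-\chi^{n-1}(c))\ii\cdot(\pm I_n)$-type expressions that must match $(\rho(c^2),\mu_c^2)$ modulo $\langle(-I_n)^{n-1},-1\rangle$. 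One should also confirm independence of the choice of $c$ (changing $c$ to $gc$ for $g\in\Ga F$ conjugates $R(c)$, compatibly) and that the two constructions are mutually inverse up to the stipulated conjugacy.

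For the final sign assertion, the point is bookkeeping: the sign of $(\rho,\chi^{1-n},\langle\cdot,\cdot\rangle)$ is by definition $-\chi^{1-n}(c)$, equivalently the $\lambda=\pm1$ with $A=\lambda A^t$; from the displayed relation $A=-\chi^{n-1}(c)A^t$ derived above and $\mu_c^2=\chi^{n-1}(c)$ (read off from the $\GL_1$-component of $R(c)^2=R(c^2)$, noting $c^2=1$ so $\chi(c^2)=1$ and the square-root normalization forces $\mu_c^2=\chi(c)$, hence $\mu_c^{2(n-1)}$ in the relevant power — here one must be careful whether the $n-1$ sits on $\chi$ or is absorbed into $\hat\xi$), one gets $\lambda=-\mu_c^2$, which is precisely the claim. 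I expect the main obstacle to be the careful handling of the central quotient $\langle(-I_n)^{n-1},-1\rangle$ and the ubiquitous square roots: making sure every ``choose a square root'' is either canonical or provably irrelevant up to $\widehat G$-conjugacy, and that the parity factor $(-1)^{n-1}$ coming from $\Phi_n\iit\Phi_n$ and the one coming from $\langle(-I_n)^{n-1},-1\rangle$ are reconciled, so that the stated equivalence ``sign $=-\mu_c^2$'' holds on the nose rather than up to an unaccounted sign. This is exactly the kind of computation done in \cite[Lemma 2.1.1]{CHT} and \cite[Section 8.3]{BuG}, so I would lean on those references for the delicate steps and present the adaptation to the $C$-group setting.
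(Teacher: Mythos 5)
Your overall strategy (extract $\rho$ and the pairing matrix $A$ from $R$, get the sign from $R(c)^2=1$, and go back by patching $R$ together from $\rho$ and a square root of $\chi$) is the same strategy the paper uses, and is a reasonable one. But there is a genuine gap in how you produce the $\GL_n$-valued representation $\rho$, and it is precisely the step you flag as delicate but then wave away: you take $\rho(\gamma)$ to be the $\GL_n$-component of a lift of $R(\gamma)$, normalised so that $\mu(\gamma)^2=\chi(\gamma)$. If you carry out the conjugation-by-$R(c)$ computation with this $\rho$, the $\mu$-factors \emph{cancel} and you get $\rho^c = A\rho^\vee A^{-1}$ with \emph{trivial} polarisation character, not $\chi^{1-n}$ (equivalently $\chi^{n-1}$). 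No amount of ``carefully inserting the $\chi$-twist'' afterwards fixes this, because the twist has to be built into $\rho$ itself. The paper handles this by introducing the explicit isomorphism
$$\theta:\frac{\GL_n\times\GL_1}{\langle(-I_n)^{n-1},-1\rangle}\longrightarrow\GL_n\times\GL_1,\qquad (g,\mu)\mapsto (g\mu^{1-n},\mu^2),$$
and taking $\rho$ to be the $\GL_n$-component of $\theta\circ R|_{\Ga F}$, i.e.\ $\rho(\delta)=g_\delta\mu_\delta^{1-n}$. This choice is forced: it is the one that is manifestly well-defined on the central quotient (so no ``consistent choice'' is needed), and it is the one that makes the $\chi^{1-n}$ come out in the polarisation.

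The same discrepancy poisons your converse direction. You propose $R(\delta)=(\rho(\delta),\chi(\delta)^{1/2})\delta$, whereas the paper sets $R(\delta)=(\rho(\delta)\chi(\delta)^{(n-1)/2},\chi(\delta)^{1/2})\delta$. With your formula, the homomorphism condition $R(c)R(\delta)R(c)^{-1}=R(c\delta c^{-1})$ collapses to $\rho^c=A\rho^\vee A^{-1}$, which contradicts the assumed polarisation $\rho^c=A\rho^\vee A^{-1}\chi^{1-n}$ unless $\chi^{1-n}$ is trivial; so your $R$ is not actually a homomorphism into $\CU(k)$. The $\chi^{(n-1)/2}$ factor is exactly what you need to reconcile the two, and it is the inverse of the twist built into $\theta$. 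You essentially notice this (``here one must be careful whether the $n-1$ sits on $\chi$ or is absorbed into $\hat\xi$'') but don't resolve it; once you pin down $\theta$ and the twisted definition of $\rho$, the rest of your bookkeeping (reading the sign off $R(c)^2$, writing $R(c)=(A\Phi_n^{-1},\mu_c)c$, independence of the choice of $c$) goes through and recovers the paper's argument.
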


\begin{remark}
	The fact that $(\rho, \chi^{n-1}, \langle\cdot,\cdot\rangle)$ is polarised is crucial to this proposition. For example, an essentially conjugate self dual representation $\rho$ such that $\rho\simeq\rho_1\+\rho_2$, where $\rho_1$ is even and $\rho_2$ is odd, would not lift to a representation valued in $\CU(k)$.
\end{remark}

\begin{proof}
		There is an isomorphism
		\begin{align*}
			\theta:\frac{\GL_n\times\GL_1}{\langle (-I_n)^{n-1}, -1\rangle}&\to\GL_n\times\GL_1\\
			(g, \mu)&\mapsto(g\mu^{1-n}, \mu^2)
		\end{align*}
		Let $\pr$ be the composition of $\theta$ with the projection to $\GL_n$. Let $R:\GalF\to\CU(k)$ be as in the proposition.	For $\delta\in \Ga F$, define:
		\begin{enumerate}
			\item $\rho(\delta) = \pr(R(\delta))$;
			\item $\langle x, y\rangle = x^t A\ii y$, where $R(c) = (A\Phi_n\ii, \mu_c)c$.
		\end{enumerate}
	
		Note that, since $R(c)^2 = (AA\iit(-I_n)^{n-1}, \mu_c^2)= (I_n, 1)$, we have $AA\iit = -\mu_c^2$, so that $(\rho, \chi^{1-n}, \langle\cdot,\cdot\rangle)$ has sign $-\mu_c^2$.
	

		Conversely, given a polarised representation $(\rho, \chi^{1-n}, \langle\cdot,\cdot\rangle)$, for $\delta\in \Ga F$, define
		$$R(\delta) = (\rho(\delta)\chi(\delta)^{(n-1)/2}, \chi(\delta)^{1/2})\delta$$
		for $\delta\in\Ga F$ and
		$$R(c) = (A\chi(c)^{(n-1)/2}\Phi_n\ii, \chi(c)^{1/2})c.$$
		
		Note that, by definition, $(\rho, \chi^{1-n}, \langle\cdot,\cdot\rangle)$ has sign $-\chi(c) = -(\chi(c)^{1/2})^2$, as required.
		
\end{proof}

We deduce \Cref{c-valued-cohomological}:

\begin{proof}[Proof of \Cref{c-valued-cohomological}]
	By \Cref{Gal-reps} and \cite{BCh}, there is a totally odd polarised Galois representation $(\rho_{\pi}, \epsilon^{1-n}, \langle\cdot,\cdot\rangle)$ attached to $\pi$.	By \Cref{polarisable-sign-bijection}, this representation lifts uniquely to a representation
	$$R_{\pi}:\GalF \to \CU(\Qlb)$$
	such that $d\circ R_{\pi} = \epsilon$, such that the projection to $\GalF$ is the identity and such that $R(c)=(A \Phi_n^{-1}, (-1)^{1/2})c$ with $A$ symmetric. 	All non-singular symmetric matrices are congruent over $\Qlb$ and, if $I_n = hAh^t$ for a matrix $h$, then 
	$$(\Phi_n\ii, (-1)^{1/2}) = (h, 1) (A \Phi_n^{-1}, (-1)^{1/2})c(h\ii, 1)c.$$
	Hence, $R$ satisfies conditions $(1), (2)$ and $(4)$.
%
It remains to check local-global compatibility at unramified places. Note that the bijection of \Cref{polarisable-sign-bijection} is induced from a sequence of $L$-homomorphisms $\CU(a,b)\to\LU(a,b)\to {}^L(\mathrm{Res}_{F/F^+}(\GL_n))$. For places $v$ inert in $F/F^+$ $(\rho_{\pi}\otimes \epsilon^{(n-1)/2})|_{G_{F_v^+}}$ is conjugate self-dual of parity 1 in the sense of \cite{Mok2015} (2.2.4) and (2.2.5), as the sign of $(\rho_\pi, \epsilon^{1-n},\langle\cdot,\cdot\rangle)$ is $-\epsilon(c)=+1$. Therefore, for both inert and split primes, it suffices  to compare the Langlands parameters under the injection $\Phi(\U(a,b)) \hookrightarrow \Phi(\mathrm{Res}_{F/F^+}(\GL_n))=\Phi({\GL_n}_{/F})$ arising from base change (as opposed to twisted base change; see \cite{Mok2015} Lemma 2.2.1).  By the proof of \Cref{polarisable-sign-bijection}, $R(\delta) = (\rho_{\pi, \ell}(\delta)\epsilon(\delta)^{(n-1)/2}, \epsilon(\delta)^{1/2})\delta$
		for $\delta\in\Ga F$. Hence, this compatibility follows from that of Theorem \ref{Gal-reps}.
\end{proof}

\begin{remark}
	The arguments of this section break down if we try to work with automorphic representations of $\GU(a,b)$ instead of automorphic representations of $\U(a,b)$. On the automorphic side, the base change map from automorphic representations of $\GU(a,b)$ to automorphic representations of $\GL_n\times\GL_1$ is two-to-one (whereas the base change map from $\U(a,b)$ to $\GL_n$ is injective). In particular, there should be two distinct lifts of the $\GL_n$ valued representation $\rho_{\pi}$ to a $\CGU$-valued representation. An analogous version of \Cref{polarisable-sign-bijection} indeed gives a two-to-one map from suitable $\CGU$-valued representations to suitable polarised Galois representations which preserves the sign at infinity. However, we are unable to show that either of the two lifts of $\rho_{\pi}$ match up to $\pi$ at all unramified primes. In \Cref{invariant-theory-remark}, we will see that the invariant theory of $\GU(a,b)$ suggests that this question is genuinely difficult.
\end{remark}

\subsection{Galois representations attached to polarised automorphic representations of $\GL_n$} \label{s2.5}

Let $F$ be a CM field and let $\pi$ be a cuspidal automorphic representation of $\GL_n(\AF)$. Assume that $\pi$ is (essentially) conjugate self-dual, i.e. that there exists a Hecke character $\chi: \A_F\t \to \C\t$ such that $\pi^c=\pi^{\vee} \otimes \chi$. Furthermore assume that there exists a character $\chi_0: \A_{F^+}\t/(F^+)\t \to \C\t$ such that $\chi=\chi_0 \circ \mathrm{Nm}_{F/F^+}\circ \mathrm{det}$ and $\chi_{0,v}(-1)$ is independent of $v \mid \infty$.

Following \cite{FP}, we say that $\pi$ is weakly regular if all its infinitesimal characters for $F_v$ with $v \mid \infty$ are of the form $a_v=(a_{1,v}, \ldots, a_{n,v})$, where the $a_{i,v}$ have multiplicity at most two. Moreover, we say that $\pi$ is odd if the Asai $L$-function $L(s, \pi, \mathrm{Asai}^{(-1)^{n-1}\epsilon(\chi_0)} \otimes \chi_0^{-1})$ has a pole at $s=1$. For precise definitions of this Langlands $L$-function, the representations $\mathrm{Asai}^{\pm}: {}^L\mathrm{Res}_{F/F^+}(\GL_n) \to \GL(\C^n \otimes \C^n)$ and the sign $\epsilon(\chi_0)$, we refer to \cite[Section 9.1]{FP}. We note here that the Rankin--Selberg $L$-function $$L(\pi \otimes \pi^{\vee},s)=L(s, \pi, \mathrm{Asai}^+ \otimes \chi_0^{-1}) L(s, \pi, \mathrm{Asai}^- \otimes \chi_0^{-1})$$ has a simple pole since $\pi$ is cuspidal and neither of the Asai $L$-values vanishes at $s=1$.

\cite{FP} combine the results of \cite{GoldringKoskivirta} and \cite{Pilloni-Stroh} (see Theorem \ref{GKPS}) with Mok's proof in \cite{Mok2015} of the Arthur classification for quasi-split unitary groups to obtain:

\begin{theorem}[\cite{FP} Theorem 9.10]
Let $F$ be a CM field and $\pi$ be a weakly regular $C$-algebraic odd cuspidal automorphic representation of $\GL_n(\AF)$, such that $\pi^c=\pi^{\vee} \otimes \chi$ for $\chi: \A_F\t \to \C\t$ as above. Then there exists a Galois representation
	$$\rho_{\pi} :\Ga F\to \GL_n(\Qlb)$$
	that is unramified at all finite places $w\nmid \l$ at which $\pi$ is unramified, satisfies local-global compatibility up to semisimplification--i.e. $(\rho_{\pi}|_{W_{F_w}})^{\mathrm{ss}} \cong \rec_{F_w}(\pi_w \otimes | \cdot |_w^{\frac{1-n}{2}})$-- at unramified places $w$, and is such that $\rho_{\pi}^c \simeq \rho_{\pi}\dual\tensor \rho_{\chi} \epsilon^{1-n}.$
\end{theorem}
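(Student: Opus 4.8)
The plan is to realise $\pi$, after a harmless twist, as the base change of a cuspidal automorphic representation of a quasi-split unitary group, to apply Theorem~\ref{GKPS} there, and to transport the resulting Galois representation back to $\GL_n$. First I would reduce to the conjugate self-dual case. For any Hecke character $\psi$ of $\A_F\t$ one has $\psi\psi^c = (\psi|_{\A_{F^+}\t})\circ\mathrm{Nm}_{F/F^+}$, so taking $\psi$ to be an algebraic extension of $\chi_0\ii$ from $\A_{F^+}\t$ to $\A_F\t$, unramified at every finite place at which $\pi$ is unramified, yields $\psi\psi^c = \chi\ii$ (recall $\chi = \chi_0\circ\mathrm{Nm}_{F/F^+}\circ\det$). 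Hence $\pi' := \pi\otimes(\psi\circ\det)$ is cuspidal, conjugate self-dual, $(\pi')^c\cong(\pi')\dual$, and still weakly regular, since twisting by a character of $\GL_1$ does not alter multiplicities of infinitesimal characters. The hypothesis that $\chi_{0,v}(-1)$ is independent of $v\mid\infty$ is what permits $\psi$ to be chosen at the archimedean places so that $\pi'$ remains $C$-algebraic.

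Next I would descend $\pi'$ by means of Mok's Arthur classification for quasi-split unitary groups \cite{Mok2015}. Being cuspidal and conjugate self-dual, $\pi'$ defines a simple (hence generic) Arthur parameter for the quasi-split unitary group $\U(a,b)$ over $F^+$, provided it is conjugate self-dual of the parity matching the chosen base-change $L$-embedding ${}^L\U(a,b)\hookrightarrow{}^L\Res_{F/F^+}(\GL_n)$; this is exactly what the oddness hypothesis provides, since a pole at $s=1$ of $L(s,\pi,\mathrm{Asai}^{(-1)^{n-1}\epsilon(\chi_0)}\otimes\chi_0\ii)$ amounts to $\pi' = \pi\otimes(\psi\circ\det)$ having the required Asai pole. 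Mok's classification then yields a cuspidal automorphic representation $\sigma$ of $\U(a,b)(\A_{F^+})$ in the corresponding global packet with $\mathrm{BC}(\sigma)\cong\pi'$, unramified wherever $\pi$ is --- cuspidality holding because the parameter does not factor through a proper Levi subgroup. At each archimedean place $v$, $\sigma_v$ lies in the local packet of $\U(a_v,b_v)$ attached to the parameter of $\pi'_v$, whose exponents have multiplicity at most two by weak regularity, so $\sigma_v$ is a non-degenerate limit of discrete series; and by Lemma~\ref{weakreg} (cf.\ Remark~\ref{mult2}) the $C$-algebraicity of $\pi$ forces $\sigma$ to be $C$-algebraic, excluding the borderline case in which every archimedean exponent has multiplicity exactly two.

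Then I would apply Theorem~\ref{GKPS} to $\sigma$: for a prime $\l$ at which $\pi$, hence $\sigma$, is unramified it produces $\rho_\sigma\colon\Ga F\to\GL_n(\Qlb)$, unramified away from $\l$ and the ramification of $\sigma$, with $\rho_\sigma^c\cong\rho_\sigma\dual\otimes\epsilon^{1-n}$ and, at unramified $w$, $(\rho_\sigma|_{W_{F_w}})^{\mathrm{ss}}\cong\rec_{F_w}(\mathrm{BC}(\sigma_v)_w\otimes|\cdot|_w^{(1-n)/2})\cong\rec_{F_w}(\pi'_w\otimes|\cdot|_w^{(1-n)/2})$. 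Setting $\rho_\pi := \rho_\sigma\otimes\rho_\psi\ii$, and using that $\rec$ turns character twists into twists by the associated Galois character together with $\pi'_w = \pi_w\otimes(\psi_w\circ\det)$, the local-global statement becomes $(\rho_\pi|_{W_{F_w}})^{\mathrm{ss}}\cong\rec_{F_w}(\pi_w\otimes|\cdot|_w^{(1-n)/2})$ at unramified $w$; and since $\rho_\psi\rho_\psi^c = \rho_{\psi\psi^c} = \rho_{\chi\ii}$, one computes $\rho_\pi^c = \rho_\sigma^c\otimes(\rho_\psi^c)\ii\cong\rho_\sigma\dual\otimes\epsilon^{1-n}\otimes\rho_\chi\rho_\psi\cong\rho_\pi\dual\otimes\rho_\chi\epsilon^{1-n}$, as required. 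Unramifiedness at the finite $w\nmid\l$ at which $\pi$ is unramified is inherited from $\rho_\sigma$ together with the choice of $\psi$.

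I expect the main difficulty to lie in the descent step. It requires the Arthur classification for quasi-split unitary groups over a general CM field, in the generality of archimedean parameters that are only limits of discrete series; the precise matching of the Asai/parity condition with the chosen $L$-embedding so that $\pi$ actually descends; and the identification of the archimedean members of the resulting global packet with the non-degenerate limits of discrete series demanded by Theorem~\ref{GKPS}. The bookkeeping relating the $C$-algebraic normalisations on $\GL_n$ and on $\U(a,b)$ under base change --- via Lemma~\ref{weakreg}, and in particular the exclusion of the case where every archimedean exponent has multiplicity exactly two, in which $\U(a,b)$ carries no twisting element --- also has to be carried out carefully.
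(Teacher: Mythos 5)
The paper does not prove this theorem --- it is quoted verbatim from \cite{FP} (Theorem~9.10), with a one-sentence indication of the strategy: ``\cite{FP} combine the results of \cite{GoldringKoskivirta} and \cite{Pilloni-Stroh} (see Theorem~\ref{GKPS}) with Mok's proof in \cite{Mok2015} of the Arthur classification for quasi-split unitary groups.'' Your reconstruction (twist by a suitable algebraic $\psi$ with $\psi\psi^c=\chi^{-1}$ to reduce to the conjugate self-dual case, invoke Mok's classification and the oddness/Asai condition to descend to a $C$-algebraic non-degenerate limit of discrete series on the quasi-split unitary group, apply Theorem~\ref{GKPS} there, and twist back) matches that strategy and the descent described in the proof of the Corollary following the theorem, which references \cite[Theorem~9.6]{FP}. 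So you have taken essentially the same route as the source the paper cites; the paper itself supplies no independent argument to compare against.
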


\cite{FP} Theorem 9.11 also proves a result towards local-global compatibility at places dividing $\ell$.

\begin{corollary}
	\Cref{main-thm-intro} implies \Cref{BC-signs-intro}.
\end{corollary}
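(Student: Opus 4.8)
The plan is to reduce \Cref{BC-signs-intro} to \Cref{main-thm-intro}: descend $\pi$ to an automorphic representation of $\U(a,b)(\A_{F^+})$, apply \Cref{main-thm-intro}, and then read off the polarisation and its sign by running the dictionary of \Cref{polarisable-sign-bijection} in reverse. As a preliminary, I would first reduce to the conjugate self-dual case. Since each $\pi_v$ descends to $\U(a,b)$, the character $\chi$ with $\pi^c\cong\pi\dual\tensor\chi$ has the form considered in \Cref{s2.5}, so it is a norm: there is an algebraic Hecke character $\psi\colon\A_F\t\to\C\t$ with $\chi=\psi\cdot\psi^c$. Replacing $\pi$ by $\pi\tensor(\psi\ii\circ\det)$, I may assume $\pi$ is conjugate self-dual; at the end of the argument I will twist the resulting Galois representation by $\rho_\psi$, using that twisting a polarised representation by a character leaves its sign unchanged and that $\rho_\psi\rho_\psi^c\epsilon^{1-n}=\rho_\chi\epsilon^{1-n}$.

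Next I would invoke the descent for conjugate self-dual cuspidal representations (Labesse's base change together with Mok's proof of the Arthur classification for quasi-split unitary groups \cite{Mok2015} and its extension to the inner form $\U(a,b)$, as used in \cite{FP}). The hypothesis that $\pi_v$ descends to a $C$-algebraic non-degenerate limit of discrete series representation of $\U(a,b)$ at every archimedean place $v$ ensures that $\pi$ is the base change of a $C$-algebraic cuspidal automorphic representation $\pi^+$ of $\U(a,b)(\A_{F^+})$, unramified at $\ell$, whose archimedean components are precisely these non-degenerate limits of discrete series. Applying \Cref{main-thm-intro} to $\pi^+$ and $\ell$ then produces $R_{\pi^+}\colon\GalF\to\CU(a,b)(\Qlb)$ whose composite with the projection to $\GalF$ is the identity, with $d\circ R_{\pi^+}=\epsilon$, satisfying local--global compatibility at unramified primes, and with $R_{\pi^+}(c)$ conjugate to $(\Phi_n\ii,(-1)^{1/2})c$ for every complex conjugation $c$.

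Now I would feed $R_{\pi^+}$, with $\chi=\epsilon$, into \Cref{polarisable-sign-bijection} to obtain a polarised Galois representation $(\rho,\epsilon^{1-n},\langle\cdot,\cdot\rangle)$, where $\rho=\pr(R_{\pi^+}|_{\Ga F})$. By \Cref{GKPS} and the local--global compatibility of $R_{\pi^+}$, the representation $\rho$ is the Galois representation $\rho_{\pi^+}$ of \Cref{GKPS}, so $\rho_\pi:=\rho\tensor\rho_\psi$ is attached to $\pi$ at unramified primes; moreover, since $R_{\pi^+}(c)$ has the Buzzard--Gee form $(\Phi_n\ii,(-1)^{1/2})c$ for every $c$, the ``moreover'' clause of \Cref{polarisable-sign-bijection} shows that the polarisation has sign $-((-1)^{1/2})^2=1$, i.e. it is totally odd. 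Transporting it along the twist by $\rho_\psi$ yields a totally odd polarisation $(\rho_\pi,\rho_\chi\epsilon^{1-n},\langle\cdot,\cdot\rangle')$ of $\rho_\pi$; in particular $\rho_\pi^c\simeq\rho_\pi\dual\tensor\rho_\chi\epsilon^{1-n}$. For the last assertion I would apply \Cref{polarisation-determines-sign}: an irreducible subrepresentation $r$ of $\rho_\pi$ with $r^c\simeq r\dual\tensor\rho_\chi\epsilon^{1-n}$ that appears with multiplicity one in $\rho_\pi$ inherits the sign $+1$ of this polarisation, and is therefore odd.

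The hard part will be the descent step. Although it is essentially contained in \cite{Mok2015} and \cite{FP}, one must be careful that the descent lands on the prescribed inner form $\U(a,b)$ rather than merely on the quasi-split form $\U^*(n)$: this is exactly the role of the archimedean hypothesis in \Cref{BC-signs-intro}, since it forces the relevant archimedean packets on $\U(a,b)$ to be non-empty and to contain the required non-degenerate limits of discrete series. One must also check that cuspidality, $C$-algebraicity, and unramifiedness at $\ell$ are preserved under the descent and that base change matches the unramified local parameters of $\pi^+$ with those of $\pi$; these points are routine given the results recalled in \Cref{prelims}.
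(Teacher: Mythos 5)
Your core strategy — descend $\pi$ to a unitary group over $F^+$, apply \Cref{main-thm-intro}, and run \Cref{polarisable-sign-bijection} backwards to obtain the polarised $\GL_n$-representation and its sign — is the same as the paper's, and your unpacking of the final steps (constructing $(\rho_\pi,\rho_\chi\epsilon^{1-n},\langle\cdot,\cdot\rangle)$ from $R_{\pi^+}$ and then invoking \Cref{polarisation-determines-sign} for multiplicity-one constituents) correctly spells out what the paper leaves implicit. However, there are two places where you deviate from the paper's (much shorter) argument in ways that create extra work or potential gaps.

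First, the inner-form worry in your final paragraph is a misreading of the hypothesis. The paper descends $\pi$ to the \emph{quasi-split} unitary group $\U(n)/F^+$ (equal to $\U(n/2,n/2)$ or $\U(\frac{n+1}{2},\frac{n-1}{2})$), which is all that \cite[Theorem 9.6]{FP} and \cite{Mok2015} provide, and that is enough: the $C$-group $\CU(a,b)$ and the Buzzard--Gee prediction at $c$ are independent of the signature, so nothing requires the descent to land on the particular $\U(a,b)$ appearing in the statement. The archimedean hypothesis in \Cref{BC-signs-intro} is used only to guarantee that the parameter of $\pi_v$ is that of a $C$-algebraic non-degenerate limit of discrete series (equivalently, weakly regular with the right parity — see \Cref{mult2}), which then makes the quasi-split descent land in the NDLDS packet. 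Trying to prescribe the inner form would require the Kaletha--Minguez--Shin--White classification rather than Mok, which is more than the paper uses or needs.

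Second, your opening reduction to the strictly conjugate self-dual case — writing $\chi=\psi\cdot\psi^c$ and twisting by $\psi^{-1}\circ\det$ — is not carried out in the paper and is not free. It requires extending $\chi_0$ (on $\A_{F^+}^\times/(F^+)^\times$) to an \emph{algebraic} Hecke character $\psi$ of $\A_F^\times$ with $\psi|_{\A_{F^+}^\times}=\chi_0$, and you assert this without justification. The paper avoids the issue entirely by following the framework of \cite[Theorems 9.6, 9.10]{FP}, which handles the essentially conjugate self-dual case directly under the hypothesis on $\chi$ recalled in \Cref{s2.5}; the twist is absorbed into the descent rather than stripped off by hand. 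Your route can likely be repaired, but as written it trades a citation for an unproved existence claim about $\psi$, and it is not the argument the paper gives.
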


\begin{proof}
As explained in \cite[Theorem 9.6]{FP}  the results of \cite{Mok2015} imply that a $C$-algebraic odd representation $\pi$ of $\GL_n(\AF)$ descends to a $C$-algebraic representation $\tilde \pi$ of the quasi-split unitary group $\U(n)/F^+$ (which equals $\U(n/2,n/2)$ for $n$ even and $\U(\frac{n+1}{2}, \frac{n-1}{2})$ for $n$ odd). The definition of weakly regular is chosen exactly to ensure that this descent is a non-degenerate limit of discrete series (see  \Cref{mult2}).
\end{proof}
\section{Lafforgue pseudocharacters and invariant theory}\label{invariants}

In this section, we prove that a limit of odd representations is odd, so that Galois representations attached to irregular automorphic representations are also odd. Our method is to reconstruct the Galois representations by using Lafforgue pseudocharacters in place of Taylor's pseudocharacters. This method was previously applied in \cite{weiss2018image} to prove that the Galois representations attached to Siegel modular forms are valued in $\Gf$.

Lafforgue pseudocharacters were introduced by Vincent Lafforgue as part of his proof of the automorphic-to-Galois direction of the geometric Langlands correspondence for a general reductive group \cite{lafforgue2012chtoucas}. Rather than following Lafforgue's original approach \cite[Section 11]{lafforgue2012chtoucas}, we use a categorical approach due to Weidner \cite{Weidner}. Our exposition follows that of \cite{weissthesis}.

\subsection{$\FFS$-algebras}

Let $\FFS$ be the category of free, finitely-generated semigroups. If $I$ is a finite set, let $\FS(I)$ denote the free semigroup generated by $I$.

If $I\to J$ is a morphism of sets, then there is a corresponding semigroup homomorphism $\FS(I)\to\FS(J)$. However, not all morphisms in $\FFS$ are of this form.

\begin{lemma}[{\cite[Lemma 2.1]{Weidner}}]\label{FFS-generators}
	Any morphism in $\FFS$ is a composition of morphisms of the following types:
	\begin{itemize}
		\item morphisms $\FS(I) \to\FS(J)$ that send generators to generators, i.e. those induced by morphisms $I\to J$ of finite sets;
		\item morphisms
		\begin{align*}
		\FS(\set{x_1,\ldots,x_n})&\to \FS(\set{y_1,\ldots,y_{n+1}})\\
		x_i&\mapsto\begin{cases}
		y_i&i<n\\
		y_ny_{n+1}&i=n.
		\end{cases}
		\end{align*}
	\end{itemize}
\end{lemma}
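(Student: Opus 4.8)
The plan is to prove the lemma by induction on the \emph{complexity}
$$c(\phi) = \sum_{i \in I}\bigl(|\phi(i)| - 1\bigr) \in \Z_{\ge 0}$$
of a morphism $\phi \colon \FS(I) \to \FS(J)$, where $|\phi(i)|$ denotes the length of the word $\phi(i) \in \FS(J)$. Here I use the basic fact that a homomorphism out of a free semigroup is determined by, and may be prescribed arbitrarily on, the generators, and that each $\phi(i)$ is a non-empty word, so that $c(\phi)$ is a well-defined non-negative integer.

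If $c(\phi) = 0$, then every $\phi(i)$ is a single generator of $\FS(J)$, so $\phi$ is precisely the morphism induced by the map of finite sets $I \to J$ sending $i$ to the generator $\phi(i)$; this is a morphism of the first type, and there is nothing more to prove.

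Now suppose $c(\phi) \ge 1$, so that there is some $i_0 \in I$ with $|\phi(i_0)| \ge 2$. First I would pre-compose with the isomorphism $\FS(I) \xrightarrow{\sim} \FS(\set{x_1, \dots, x_n})$ induced by a bijection of generating sets carrying $i_0$ to $x_n$ — itself a morphism of the first type — to reduce to the case $I = \set{x_1, \dots, x_n}$ with $|\phi(x_n)| \ge 2$. Write $\phi(x_n) = u \cdot v$ with $u \in J$ the first letter and $v \in \FS(J)$ the remaining (non-empty) word. Let $\sigma \colon \FS(\set{x_1, \dots, x_n}) \to \FS(\set{y_1, \dots, y_{n+1}})$ be the elementary morphism of the second type, and define $\psi \colon \FS(\set{y_1, \dots, y_{n+1}}) \to \FS(J)$ on generators by $\psi(y_i) = \phi(x_i)$ for $i < n$, $\psi(y_n) = u$, and $\psi(y_{n+1}) = v$. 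A direct check on generators gives $\psi \circ \sigma = \phi$, while
$$c(\psi) = \sum_{i < n}\bigl(|\phi(x_i)| - 1\bigr) + \bigl(|u| - 1\bigr) + \bigl(|v| - 1\bigr) = c(\phi) - 1,$$
since $|u| = 1$ and $|u| + |v| = |\phi(x_n)|$. By the inductive hypothesis, $\psi$ is a composition of morphisms of the two listed types, hence so is $\phi = \psi \circ \sigma$, which completes the induction.

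The step requiring the most care — though it is not genuinely difficult — is the bookkeeping in the inductive step: one must invoke an isomorphism of the first type to move the generator being split into the last slot, since the elementary morphism of the second type only splits $x_n$, and one must verify that $c$ is a legitimate well-founded induction measure, i.e. a non-negative integer that strictly decreases under the factorization. Everything else reduces to the elementary observation that a homomorphism out of a free semigroup is freely determined on generators.
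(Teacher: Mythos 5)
The paper does not prove this lemma; it simply cites \cite[Lemma 2.1]{Weidner}, so there is no in-text argument to compare against. Your proof is correct and is the natural one: you induct on the total length defect $c(\phi)=\sum_{i\in I}(|\phi(i)|-1)$, which is a well-founded measure precisely because words in a free \emph{semigroup} are non-empty (so $|\phi(i)|\ge 1$ for all $i$), and you strictly decrease it by peeling the leading letter off one long image via a single elementary splitting morphism, precomposed where necessary with a permutation of generators (itself of the first type, and leaving $c$ unchanged). The verification that $\psi\circ\sigma=\phi$ on generators and that $c(\psi)=c(\phi)-1$ is exactly right. This matches the standard way such a statement is established, and I see no gap.
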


\begin{definition}
	Let $R$ be a ring. An \emph{$\FFS$-algebra} is a functor from $\FFS$ to the category $\Ralg$ of $R$-algebras. A morphism of $\FFS$-algebras is a natural transformation of functors.
\end{definition}

We will be interested in the following two examples:

\begin{examples}
	\mbox{}
	\begin{enumerate}
		\item Let $\Gamma$ be a group and let $A$ be an $R$-algebra. We define a functor
		$$\Map(\Gamma\tdo, A):\FFS\to\Ralg$$
		as follows.	For each finite set $I$, let $\Map(\Gamma^I, A)$ denote the $R$-algebra of set maps $\Gamma^I\to A$. The functor
		$$\Map(\Gamma\tdo,A):\FS(I)\mapsto \Map(\Gamma^I, A)$$
		is an $\FFS$-algebra.

		If $\Gamma$ is a topological group and $A$ is a topological $R$-algebra, then this construction works with continuous maps in place of set maps.
		
		\item Let $G, X$ be affine group schemes over $R$, and let $G$ act on $X$. For any finite set $I$, $G$ acts diagonally on $X^I$, and, hence, $G$ acts on the coordinate ring $R[X^I]$ of $X^I$.
		
		For each finite set $I$, let $R[X^I]^G$ be the $R$-algebra of fixed points of $R[X^I]$ under the action of $G$. A morphism $\phi:\FS(I)\to\FS(J)$  in $\FFS$ induces a morphism of $R$-schemes $X^J\to X^I$, and thus a $R$-algebra morphism $R[X^I]^G\to R[X^J]^G$. The corresponding functor
		$$R[X\tdo]^G:\FS(I)\mapsto R[X^I]^G$$
		is an $\FFS$-algebra.
	\end{enumerate}
\end{examples}

\subsection{Lafforgue pseudocharacters}

Let $R$ be a ring and let $G$ be a reductive group over $R$. Let $G^\circ$ denote the identity connected component of $G$, which we assume is split. Then $G^\circ$ acts on $G$ by conjugation, and we can form the $\FFS$-algebra $R[G\tdo]^{G^\circ}$.

\begin{definition}
	Let $\Gamma$ be a group and let $A$ be a $R$-algebra. A \emph{$G$-pseudocharacter} of $\Gamma$ over $A$ is an $\FFS$-algebra morphism
	$$\Theta\tdo:R[G\tdo]^{G^\circ}\to\Map(\Gamma\tdo,A).$$
\end{definition}

\begin{remarks}
	\mbox{}
	\begin{enumerate}
		\item Unwinding this definition recovers Lafforgue's original definition \cite[Definition 11.3]{lafforgue2012chtoucas}. Indeed, Lafforgue defines a pseudocharacter as a collection $(\Theta_n)_{n\ge 1}$ of algebra maps
		$$\Theta_n:R[G^n]^{G^\circ}\to \Map(\Gamma^n, A)$$
		that are compatible in the following sense:
		\begin{enumerate}
			\item If $n, m\ge1$ are integers and $\zeta:\{1,\ldots,m\}\to\{1, \ldots,n \}$, then for every $f\in R[G^m]^{G^\circ}$ and $\gamma_1,\ldots\gamma_n\in \Gamma$, we have
			$$\Theta_n(f^\zeta)(\gamma_1,\ldots,\gamma_n) = \Theta_m(f)(\gamma_{\zeta(1)},\ldots, \gamma_{\zeta(m)}),$$
			where $f^\zeta(g_1,\ldots, g_n) = f(g_{\zeta(1)},\ldots,g_{\zeta(m)})$.
			\item For every integer $n\ge 1$, $f\in R[G^n]^{G^\circ}$ and $\gamma_1,\ldots\gamma_{n+1}\in \Gamma$, we have
			$$\Theta_{n+1}(\hat{f})(\gamma_1,\ldots,\gamma_{n+1}) = \Theta_n(f)(\gamma_1, \ldots, \gamma_{n-1}, \gamma_n\gamma_{n+1}),$$
			where $\hat{f}(g_1, \ldots, g_{n+1}) =f(g_1, \ldots, g_{n-1}, g_ng_{n+1})$. 
		\end{enumerate}
		
		By definition, an $\FFS$-algebra morphism $R[G\tdo]^{G^\circ}\to\Map(\Gamma\tdo,A)$ consists of a collection of $R$-algebra morphisms $\Theta^I:R[G^I]^{G^\circ}\to \Map(\Gamma^I, A)$, such that for any semigroup homomorphism $\phi:\FS(I)\to\FS(J)$, the following diagram commutes:
		\begin{center}
			\begin{tikzcd}
			R[G^I]^{G^\circ}\arrow[r, "\Theta^I"]\arrow[d]&\Map(\Gamma^I, A)\arrow[d]\\
			R[G^J]^{G^\circ}\arrow[r, "\Theta^J"]&\Map(\Gamma^J, A)
			\end{tikzcd}
		\end{center}
		Here, the vertical arrows are those induced by $\phi$. By \Cref{FFS-generators}, checking that this diagram commutes for all morphisms $\phi$ is equivalent to verifying conditions (a) and (b) above.
		
		\item Suppose that $G$ is a connected linear algebraic group with a fixed embedding ${G\hookrightarrow\GL_r}$ for some $r$. Let $\chi$ denote the composition of this embedding with the usual trace function. Then $\chi\in\Z[G]^G$, and if $\Theta\tdo$ is a $G$-pseudocharacter of $\Gamma$ over $A$, then  
		$$\Theta^{1}(\chi)\in \Map(\Gamma, A)$$
		is a classical pseudocharacter. In fact, we will see in \Cref{CG-pseudo} that when $G=\GL_n$, $\Theta\tdo$ is completely determined by $\Theta^{1}(\chi)$ \cite[Remark 11.8]{lafforgue2012chtoucas}. In particular, the notion of a $G$-pseudocharacter is a generalisation of the notion of a classical pseudocharacter. 
		
	\end{enumerate}
\end{remarks}

We finish this subsection by recording a generalisation of \cite[Lemma 1]{taylor1991galois}, which notes that we are free to change the $R$-algebra $A$. Part (i) is part of \cite[Lemma 4.4]{bockle2016}.

\begin{lemma}\label{lafforgue-factor}
	Let $A$ be an $R$-algebra, and let $\Gamma$ be a group. 
	\begin{enumerate}
		\item Let $h:A\to A'$ be a morphism of $R$-algebras, and let $\Theta\tdo$ be a $G$-pseudocharacter of $\Gamma$ over $A$. Then $h_*(\Theta) = h\circ\Theta\tdo$ is a $G$-pseudocharacter of $\Gamma$ over $A'$.
		
		\item Let $h:A\hookrightarrow A'$ be an injective morphism of $R$-algebras. Define a collection of maps $\Theta\tdo$, where, for each finite set $I$, $\Theta^I:R[G^I]^{G^\circ}\to \Map(\Gamma^I, A)$ is a map of sets.
		
		Suppose that $h\circ\Theta\tdo$ is a $G$-pseudocharacter of $\Gamma$ over $A'$. Then $\Theta\tdo$ is a $G$-pseudocharacter over $A$.
	\end{enumerate}
\end{lemma}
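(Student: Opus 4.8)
The plan is to package post-composition with $h$ as a morphism of $\FFS$-algebras and let both statements fall out formally. For each finite set $I$, the rule $f\mapsto h\circ f$ is an $R$-algebra homomorphism $\Map(\Gamma^I, A)\to\Map(\Gamma^I, A')$ (continuity is preserved in the topological setting, since $h$ is continuous). These assemble into a morphism of $\FFS$-algebras
$$h_\bullet\colon\Map(\Gamma\tdo, A)\longrightarrow\Map(\Gamma\tdo, A').$$
Indeed, $\Map(\Gamma\tdo, A)$ factors as the composite of the functor $\FFS\to\mathbf{Set}^{\mathrm{op}}$ sending $\FS(I)$ to $\Gamma^I$ with the functor $\Map(-, A)\colon\mathbf{Set}^{\mathrm{op}}\to\Ralg$, and $h$ induces a natural transformation $\Map(-, A)\Rightarrow\Map(-, A')$ of the latter; restricting along the former gives $h_\bullet$. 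The only point I would check with care here is that every morphism of $\FFS$ acts on the powers $\Gamma^I$ through an honest map of sets, so that the structure maps of $\Map(\Gamma\tdo, A)$ are natural in $A$; this is immediate from the two families of generating morphisms in \Cref{FFS-generators}.

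Given this, part (i) is just composition of morphisms: a $G$-pseudocharacter $\Theta\tdo$ is by definition a morphism of $\FFS$-algebras $R[G\tdo]^{G^\circ}\to\Map(\Gamma\tdo, A)$, so $h\circ\Theta\tdo = h_\bullet\circ\Theta\tdo$ is again a morphism of $\FFS$-algebras, i.e.\ a $G$-pseudocharacter of $\Gamma$ over $A'$. (This recovers \cite[Lemma 4.4]{bockle2016}.)

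For part (ii), I would use that when $h$ is injective each component $\Map(\Gamma^I, A)\to\Map(\Gamma^I, A')$ of $h_\bullet$ is injective, since a set map valued in $A$ is determined by its composite with $h$. So suppose $h_\bullet\circ\Theta\tdo$ is a morphism of $\FFS$-algebras. Then each $\Theta^I$ is an $R$-algebra homomorphism: from $h_\bullet(\Theta^I(xy)) = h_\bullet(\Theta^I(x))\,h_\bullet(\Theta^I(y)) = h_\bullet(\Theta^I(x)\Theta^I(y))$ and injectivity we get $\Theta^I(xy) = \Theta^I(x)\Theta^I(y)$, and likewise for sums, $R$-scalars and the unit. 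And $\Theta\tdo$ is natural, because for any $\phi\colon\FS(I)\to\FS(J)$ in $\FFS$ the two composites $R[G^I]^{G^\circ}\to\Map(\Gamma^J, A)$ agree after applying the injective map $h_\bullet$ (using that $h_\bullet$ is itself a morphism of $\FFS$-algebras). Hence $\Theta\tdo$ is a $G$-pseudocharacter of $\Gamma$ over $A$.

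I do not expect any real obstacle: the whole argument is formal bookkeeping once $h_\bullet$ is set up, and the only mild subtlety is the one flagged above — that the $\FFS$-algebra structure on $\Map(\Gamma\tdo, A)$ is functorial in $A$, which is handled by appealing to the explicit generators of $\FFS$. As an alternative to the functor-of-functors phrasing, one could instead verify directly that $h\circ\Theta\tdo$ (resp.\ $\Theta\tdo$) satisfies Lafforgue's compatibilities (a) and (b) from the remark following the definition of a $G$-pseudocharacter, which amounts to the same one-line computations.
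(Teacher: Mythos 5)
The paper does not actually prove this lemma: it states it without proof, citing \cite[Lemma 4.4]{bockle2016} for part (i) and leaving part (ii) as a straightforward observation. Your argument is correct and fills this in cleanly. Packaging post-composition with $h$ as a morphism $h_\bullet$ of $\FFS$-algebras is the right move, and your factorisation of $\Map(\Gamma\tdo,A)$ through $\mathbf{Set}^{\mathrm{op}}$ is sound once one checks (as you flag, and as follows from the two generating families in \Cref{FFS-generators}) that every morphism in $\FFS$ acts on the powers $\Gamma^I$ by genuine set maps. Part (i) is then composition of $\FFS$-algebra morphisms, and part (ii) follows from componentwise injectivity of $h_\bullet$ exactly as you say: the algebra axioms for each $\Theta^I$ and the naturality squares are detected after applying $h_\bullet$, and injectivity pulls them back. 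The only small caveat is that in the topological variant one should note that $h$ being a continuous algebra map suffices for $h_\bullet$ to land in continuous maps, which you also mention. Your closing remark that one could instead check Lafforgue's compatibilities (a) and (b) directly is the more pedestrian but equally valid route, and is presumably the one the authors had in mind when declaring the lemma obvious.
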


%

\subsection{Lafforgue pseudocharacters and $G$-valued representations}

The key motivation for introducing Lafforgue pseudocharacters is their connection to $G$-valued representations. From now on, assume that $R=\Z$, so that $G$ is a reductive group over $\Z$ with $G^\circ$ split, and $A$ is a ring.

\begin{lemma}\label{Lafforgue-rep-to-pseudo}
	Let $\rho:\Gamma\to G(A)$ be a representation of $\Gamma$. Define
	$$(\Tr\rho)\tdo:\Z[G\tdo]^{G^\circ}\to \Map(\Gamma\tdo, A)$$
	by
	$$(\Tr\rho)^I(f)\big((\gamma_i)_{i\in I}\big)=f\big((\rho(\gamma_i))_{i\in I}\big)$$
	for each finite set $I$ and for each $f\in\Z[G^I]^{G^\circ}$.
	
	Then $(\Tr\rho)\tdo$ is a $G$-pseudocharacter of $\Gamma$ over $A$. Moreover, $(\Tr\rho)\tdo$ depends only on the $G^\circ(A)$-conjugacy class of $\rho$.
\end{lemma}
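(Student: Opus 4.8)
The plan is to verify the three assertions in turn: that for each finite set $I$ the map $(\Tr\rho)^I$ is a morphism of $\Z$-algebras; that the collection $(\Tr\rho)\tdo$ is natural with respect to morphisms in $\FFS$, i.e.\ is an $\FFS$-algebra morphism; and that the resulting pseudocharacter depends only on the $G^\circ(A)$-conjugacy class of $\rho$. The first point is immediate: fixing a tuple $(\gamma_i)_{i\in I}\in\Gamma^I$, the assignment $f\mapsto f((\rho(\gamma_i))_{i\in I})$ is the composite of the inclusion $\Z[G^I]^{G^\circ}\hookrightarrow\Z[G^I]$ with evaluation at the $A$-point $(\rho(\gamma_i))_{i\in I}\in G^I(A)$, hence a ring homomorphism; since the algebra operations on $\Map(\Gamma^I,A)$ are pointwise, $(\Tr\rho)^I$ is itself a ring homomorphism.

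For naturality I would invoke \Cref{FFS-generators} and check that $(\Tr\rho)\tdo$ commutes with the maps induced by $\phi$ only for the two generating types of morphism $\phi$. A morphism $\FS(I)\to\FS(J)$ arising from a map of finite sets $\zeta\colon I\to J$ induces a reindexing map $G^J\to G^I$ (and likewise $\Gamma^J\to\Gamma^I$), and both composites send $f\in\Z[G^I]^{G^\circ}$ to the function $(\gamma_j)_j\mapsto f((\rho(\gamma_{\zeta(i)}))_{i\in I})$, so there is nothing to check. For the multiplication morphism sending $x_i\mapsto y_i$ $(i<n)$ and $x_n\mapsto y_ny_{n+1}$, the induced maps are $(g_1,\dots,g_{n+1})\mapsto(g_1,\dots,g_{n-1},g_ng_{n+1})$ from $G^{n+1}$ to $G^n$ and $(\gamma_1,\dots,\gamma_{n+1})\mapsto(\gamma_1,\dots,\gamma_{n-1},\gamma_n\gamma_{n+1})$ from $\Gamma^{n+1}$ to $\Gamma^n$; the two composites agree precisely because $\rho$ is a group homomorphism, so that $\rho(\gamma_n\gamma_{n+1})=\rho(\gamma_n)\rho(\gamma_{n+1})$. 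These two verifications are exactly Lafforgue's compatibility conditions (a) and (b) recalled above, and (b) is the only place at which the homomorphism property of $\rho$ enters.

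Finally, suppose $\rho'(\gamma)=g\,\rho(\gamma)\,g\ii$ for all $\gamma\in\Gamma$, with $g\in G^\circ(A)$. Then $(\Tr\rho')^I(f)((\gamma_i)_i)=f((g\,\rho(\gamma_i)\,g\ii)_i)$, and this equals $f((\rho(\gamma_i))_i)$ because $f\in\Z[G^I]^{G^\circ}$ is fixed by the coaction: the identity $a^*f=\mathrm{pr}^*f$ in $\Z[G^\circ\times G^I]$, where $a$ is the diagonal conjugation action and $\mathrm{pr}$ the projection to $G^I$, evaluated at the $A$-point $(g,(\rho(\gamma_i))_i)$, gives $f((g\rho(\gamma_i)g\ii)_i)=f((\rho(\gamma_i))_i)$. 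Hence $(\Tr\rho')^I=(\Tr\rho)^I$ for every $I$, so $(\Tr\rho')\tdo=(\Tr\rho)\tdo$. I do not anticipate a genuine obstacle here --- the proof is essentially an unwinding of definitions --- but the one step that deserves care is this last one: it rests on the functor-of-points description of $\Z[G^I]^{G^\circ}$ as the ring of functions constant on $G^\circ(A)$-conjugation orbits for every $\Z$-algebra $A$, which holds over the non-field base $\Z$ with no flatness hypothesis, directly from the fact that $f$ is fixed by the coaction.
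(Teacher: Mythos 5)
Your proposal is correct and carries out precisely the routine verification that the paper leaves implicit: the lemma is stated without proof, with the setup (the remarks following the definition of a $G$-pseudocharacter, reducing naturality to Lafforgue's conditions (a) and (b) via \Cref{FFS-generators}) making the argument a straightforward unwinding of definitions. Your care about the conjugation-invariance step --- that $\Z[G^I]^{G^\circ}$ in the schematic (coaction) sense makes the identity $f((g\rho(\gamma_i)g^{-1})_i)=f((\rho(\gamma_i))_i)$ valid for $A$-points over the non-field base $\Z$ --- is exactly the right thing to flag, and your treatment of it is correct.
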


In fact, in many cases, the converse of \Cref{Lafforgue-rep-to-pseudo} is also true. Let $k$ be an algebraically closed field and let $\rho: \Gamma\to G(k)$ be a representation of $\Gamma$. If $G = \GL_n$ and $\rho$ is semisimple, then Taylor \cite{taylor1991galois} proved that $\rho$ can be recovered from its classical pseudocharacter. To state the generalisation of this fact for $G$-pseudocharacters, we first define what it means for $\rho$ to be semisimple in general.

\begin{definition}[{\cite[Definitions 3.3, 3.5]{bockle2016}}]\label{def-g-irred}
	Let $H$ denote the Zariski closure of $\rho(\Gamma)$.
	\begin{enumerate}
		\item We say that $\rho$ is \emph{$G$-irreducible} if there is no proper parabolic subgroup of $G$ containing $H$.
		\item We say that $\rho$ is \emph{semisimple} or \emph{$G$-completely reducible} if, for any parabolic subgroup $P\sub G$ containing $H$, there exists a Levi subgroup of $P$ containing $H$.
	\end{enumerate}
\end{definition}

\begin{theorem}[{\cite[Proposition 11.7]{lafforgue2012chtoucas}}, {\cite[Theorem 4.5]{bockle2016}}]\label{lafforge-to-rep}
	Let $k$ be an algebraically closed field. The assignment $\rho\mapsto(\Tr\rho)\tdo$ defines a bijection between the following two sets:
	\begin{enumerate}
		\item The set of $G^\circ(k)$-conjugacy classes of $G$-completely reducible homomorphisms ${\rho:\Gamma\to G(k)}$;
		\item The set of $G$-pseudocharacters $\Theta\tdo:\Z[G\tdo]^{G^\circ}\to\Map(\Gamma\tdo,k)$ of $\Gamma$ over $k$.
	\end{enumerate}
\end{theorem}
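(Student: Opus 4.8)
The plan is to construct the inverse to $\rho\mapsto(\Tr\rho)\tdo$, the essential inputs being classical geometric invariant theory over the algebraically closed field $k$ together with the dictionary between $G$-complete reducibility and closedness of $G^\circ$-conjugation orbits. Well-definedness -- that $(\Tr\rho)\tdo$ is a $G$-pseudocharacter depending only on the $G^\circ(k)$-conjugacy class of $\rho$ -- is \Cref{Lafforgue-rep-to-pseudo}, so it remains to prove surjectivity and injectivity.

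The key reformulation is the following. For a finite set $I$, since $G^\circ$ is split reductive the formation of invariants commutes with base change to $k$, so $\Z[G^I]^{G^\circ}\tensor_{\Z}k=k[G^I_k]^{G^\circ_k}$, a finitely generated $k$-algebra; write $Q_I=\mathrm{Spec}\,k[G^I_k]^{G^\circ_k}$ for the corresponding GIT quotient of $G^I$ by $G^\circ$-conjugation. As $k$ is algebraically closed, the quotient morphism $G^I(k)\to Q_I(k)$ induces a bijection between $Q_I(k)$ and the set of closed $G^\circ(k)$-orbits in $G^I(k)$. Given a $G$-pseudocharacter $\Theta\tdo$ and a tuple $(\gamma_i)_{i\in I}\in\Gamma^I$, the assignment $f\mapsto\Theta^I(f)\big((\gamma_i)_{i\in I}\big)$ is a $k$-algebra homomorphism $k[G^I_k]^{G^\circ_k}\to k$, i.e. a $k$-point of $Q_I$, and hence picks out a closed orbit $\mathcal O_{(\gamma_i)}\subseteq G^I(k)$. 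The $\FFS$-functoriality of $\Theta\tdo$ -- equivalently, compatibilities (a) and (b) of Lafforgue's definition -- translates exactly into the statement that these closed orbits are compatible under the coordinate projections $\Gamma^I\to\Gamma^J$ and under the multiplication maps $(\ldots,\gamma_n,\gamma_{n+1})\mapsto(\ldots,\gamma_n\gamma_{n+1})$. Thus a $G$-pseudocharacter of $\Gamma$ over $k$ is the same datum as a compatible system of closed $G^\circ(k)$-orbits $\mathcal O_T\subseteq G^T(k)$ indexed by the finite subsets $T\subseteq\Gamma$, and $(\Tr\rho)\tdo$ corresponds to the system $T\mapsto\big(\text{the closed orbit in }\overline{G^\circ\cdot(\rho(t))_{t\in T}}\big)$.

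For injectivity, let $\rho_1,\rho_2\colon\Gamma\to G(k)$ be $G$-completely reducible with $(\Tr\rho_1)\tdo=(\Tr\rho_2)\tdo$, and let $H_i$ be the Zariski closure of $\rho_i(\Gamma)$, so each $H_i$ is $G$-cr in the sense of \Cref{def-g-irred}. Since $H_i$ is Noetherian, some finite $T_0\subseteq\Gamma$ satisfies $\overline{\langle\rho_1(T_0)\rangle}=H_1$ and $\overline{\langle\rho_2(T_0)\rangle}=H_2$. For any finite $T$ with $T_0\subseteq T\subseteq\Gamma$, the tuples $(\rho_1(t))_{t\in T}$ and $(\rho_2(t))_{t\in T}$ then generate subgroups whose Zariski closures are the $G$-cr groups $H_1$ and $H_2$; by the characterisation of $G$-complete reducibility via closed orbits (Richardson; Bate--Martin--R\"ohrle), both orbits $G^\circ\cdot(\rho_1(t))_{t\in T}$ and $G^\circ\cdot(\rho_2(t))_{t\in T}$ are closed. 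They have the same image in $Q_T(k)$, so they coincide, and hence the transporter $S_T=\set{g\in G^\circ(k):g\rho_1(t)g\ii=\rho_2(t)\text{ for all }t\in T}$ is a nonempty Zariski-closed subset of $G^\circ$. As $T$ grows, the $S_T$ form a descending chain of closed subsets of the Noetherian space $G^\circ$, hence stabilise; choosing $g$ in the nonempty intersection $\bigcap_{T\supseteq T_0}S_T$ gives $g\rho_1(\gamma)g\ii=\rho_2(\gamma)$ for every $\gamma\in\Gamma$.

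For surjectivity one must reverse the dictionary: starting from a $G$-pseudocharacter, equivalently a compatible system of closed orbits $\mathcal O_T$, one shows that this system is induced by an actual group homomorphism $\rho\colon\Gamma\to G(k)$ -- here the multiplication compatibility (b) is precisely what forces a coherently chosen representative to be multiplicative -- and that the resulting $\rho$ is $G$-completely reducible, its image-closure then having a closed conjugation-orbit for every generating tuple. Once $\rho$ is produced, $(\Tr\rho)\tdo=\Theta\tdo$ is automatic, since both systems attach the same point of each $Q_I(k)$ to every tuple and $Q_I(k)$ separates closed orbits. This reconstruction step -- producing the homomorphism from the orbit data, including the nonemptiness of the relevant inverse limit over finite subsets, and recognising $G$-complete reducibility -- is where the real work lies; it is carried out in \cite[Proposition 11.7]{lafforgue2012chtoucas} and \cite[Theorem 4.5]{bockle2016}, resting on the representation-theoretic characterisation of $G$-complete reducibility in terms of closedness of orbits, and it is the step I expect to be the main obstacle.
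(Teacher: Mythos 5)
The paper does not prove this theorem; the bracketed references in the theorem header \emph{are} the proof, delegating to Lafforgue's Proposition~11.7 and B\"ockle--Harris--Khare--Thorne Theorem~4.5. Your outline correctly reconstructs the strategy of those sources: identify a $G$-pseudocharacter over $k$ with a compatible system of $k$-points of the GIT quotients $\Spec\bigl(\Z[G^I]^{G^\circ}\otimes_\Z k\bigr)$, hence with a compatible system of closed $G^\circ(k)$-orbits, and then use the Richardson / Bate--Martin--R\"ohrle characterisation of $G$-complete reducibility via closedness of conjugation orbits. Your injectivity argument is sound: Noetherianity of the image closure gives a finite $T_0$ generating it densely (because the family of finite subsets is directed rather than a chain, this uses the existence of a maximal element among the closures $\overline{\langle\rho_i(T)\rangle}$, which Noetherianity supplies); the GIT quotient separates closed orbits over an algebraically closed field, so the orbits for $\rho_1$ and $\rho_2$ coincide once $T\supseteq T_0$; and the transporters $S_T$ form a downward-directed family of nonempty closed subsets of the Noetherian space $G^\circ$, so their intersection is nonempty. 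One detail worth flagging explicitly is that $G$ may be disconnected while the action is by $G^\circ$-conjugation; the closed-orbit criterion you need is the Bate--Martin--R\"ohrle version for possibly non-connected $G$, which is the form B\"ockle et al.\ invoke.

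Your surjectivity discussion correctly isolates the hard step---producing a homomorphism $\rho\colon\Gamma\to G(k)$ realising the given compatible system of closed orbits and verifying that it is $G$-completely reducible---but does not carry it out. As a standalone proof this is therefore incomplete: the reconstruction of $\rho$ from the orbit system is genuinely the content of Lafforgue's Proposition~11.7 and of Theorem~4.5 of B\"ockle et al., and cannot be waved away. Since the paper itself supplies no proof and cites those same sources for exactly that step, your outline is consistent with how the paper uses the result, but you should be aware that you have only proved half of it.
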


\subsection{$\CU$-pseudocharacters}\label{CG-pseudo}

\begin{definition}[{\cite[Definition 2.6]{Weidner}}]
	Let $A\tdo$ be an $\FFS$-algebra. Given a subset $\Sigma\sub \bigcup_{I}A^I$, define the \emph{span} of $\Sigma$ in $A\tdo$ to be the smallest $\FFS$-subalgebra $B\tdo$ of $A\tdo$, such that $\Sigma\sub \bigcup_IB^I$. We say that $\Sigma$ generates $A\tdo$ if the span of $\Sigma$ in $A\tdo$ is the whole of $A\tdo$.
\end{definition}

\begin{example}
	Suppose that $k$ is a field of characteristic $0$. By results of Procesi \cite{Procesi-invariants}, the $\FFS$-algebra $k[\GL_n\tdo]^{\GL_n}$ is generated by the elements $\Tr,\det\ii\in k[\GL_n]^{\GL_n}$. If $\FFG$ denotes the category of free, finitely-generated groups, then we can define $\FFG$-algebras analogously to $\FFS$-algebras and, as an $\FFG$-algebra, $k[\GL_n\tdo]^{\GL_n}$ is generated by $\Tr$.\footnote{If $X\in \GL_n$, then $\det(X)$ can be expressed as a polynomial in $\Tr(X^i)$. Hence, as an $\FFG$-algebra, $\det\ii$ is in the $\FFG$-subalgebra generated by $\Tr$.} Hence, if $\Theta\tdo$ is any $\GL_n$-pseudocharacter, by \cite[Corollary 2.13]{Weidner}, $\Theta\tdo$ is completely determined by its classical pseudocharacter $\Theta^{1}(\Tr)\in\Map(\Gamma, k)$.
	
	More generally, if $R$ is a ring, then $R[\GL_n\tdo]^{\GL_n}$ is generated by $s_i, \det\ii\in R[\GL_n]^{\GL_n}$, where $s_i$ is the $i\th$ coefficient of the characteristic polynomial \cite{procesiconcini} .
\end{example}

\begin{lemma}\label{invariants}
	As an $\FFS$-algebra, $\Z[\CU\tdo]^{\widehat{\widetilde{U}}}$ is generated by the elements
	\begin{alignat*}{2}
		&(g, \mu)\;\mapsto \;s_i(g\mu^{1-n}) &&\quad (g, \mu)c\;\mapsto\; 0,\\
		&(g, \mu)\;\mapsto\; \det(g\mu^{1-n})\ii &&\quad (g, \mu)c\;\mapsto\; 0,\\
		&(g, \mu)\;\mapsto\; \mu^{\pm2}&&\quad(g, \mu)c\;\mapsto \;0,\\
		&(g,  \mu)\;\mapsto\; 0&&\quad(g, \mu)c\;\mapsto \;\mu^{\pm2},
	\end{alignat*} 
	of $\Z[\CU]^{\widehat{\widetilde{U}}}$.
\end{lemma}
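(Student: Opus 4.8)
The plan is to compute the invariant ring $\Z[\CU^I]^{\widehat{\widetilde U}}$ directly by exploiting the semidirect product structure $\CU = \widehat{\widetilde U}\rtimes\GalF$, where $\widehat{\widetilde U} = (\GL_n\times\GL_1)/\langle(-I_n)^{n-1},-1\rangle$. First I would reduce to the Galois quotient $\Gal(F/F^+)=\{1,c\}$: since $\GalF$ acts on $\widehat{\widetilde U}$ through this quotient and the invariant functions only see the component structure, $\CU^I$ decomposes as a disjoint union of pieces indexed by the "Galois types" $(\gamma_i)_{i\in I}\in\{1,c\}^I$, and $\widehat{\widetilde U}$ permutes nothing among these pieces (it fixes each $\GalF$-component setwise), so $\Z[\CU^I]^{\widehat{\widetilde U}}$ splits as a product over types. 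On the identity-type component (all $\gamma_i=1$) the space is $(\GL_n\times\GL_1)^I$ modulo the finite central subgroup, with $\widehat{\widetilde U}$ acting by simultaneous conjugation on the $\GL_n$-factors and trivially on the $\GL_1$-factors; via the isomorphism $\theta\colon(g,\mu)\mapsto(g\mu^{1-n},\mu^2)$ of the proof of \Cref{polarisable-sign-bijection} this becomes honest simultaneous conjugation on $\GL_n^I$ times the trivial action on $\GL_1^I$, whose invariant ring is generated (over $\Z$) by the characteristic-polynomial coefficients $s_i$ of words in the $\GL_n$-variables, the inverse determinants, and $\mu^{\pm2}$ in each $\GL_1$-variable — this is exactly where I invoke the Procesi/De Concini–Procesi description of $R[\GL_n^{\scriptscriptstyle\bullet}]^{\GL_n}$ recalled in the preceding example. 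For any type involving at least one $c$, I claim every invariant function vanishes identically: the function $c\cdot(g,\mu)=(\Phi_n g^{-t}\Phi_n^{-1},\mu)c$ shows that conjugating a group element on the $c$-component by $h\in\widehat{\widetilde U}$ acts on the $\GL_n$-coordinate $A\Phi_n^{-1}$ by $A\mapsto hAh^{t}$ (a congruence action, not a conjugation), and the $\widehat{\widetilde U}$-orbit of any such point has $0$ in its closure (rescale $h\to th$), so by continuity/closedness of invariants every invariant is $0$ on that component — hence the listed generators, which vanish off the identity type, suffice.

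The key steps, in order, are: (1) decompose $\CU^I$ into $\widehat{\widetilde U}$-stable pieces indexed by $\{1,c\}^I$ and correspondingly factor the invariant ring as a product indexed by types; (2) identify the identity-type piece with $(\GL_n^I\times\GL_1^I)$ under the simultaneous-conjugation action after applying $\theta$ to each factor, and quote the classical invariant theory of $\GL_n$ to get the generators $s_i(\text{word})$, $\det^{-1}$, $\mu^{\pm2}$; (3) show that on every non-identity type the ring of invariants is $\Z$ (only constants), because the $\widehat{\widetilde U}$-action is twisted to a congruence action whose generic orbit closures contain the origin, killing all non-constant invariants — and here one must be slightly careful that "invariant" means fixed by the full group scheme, so that nilpotents/characteristic-$p$ phenomena do not obstruct the orbit-closure argument; (4) repackage these per-type generators as functions on all of $\CU^I$ by extending by $0$ on the other types, which is possible precisely because the types are clopen in $\CU^I$, giving the four families in the statement; (5) finally, verify that these generate as an $\FFS$-algebra, not merely componentwise — i.e. that the transition maps along morphisms of $\FFS$ (the two types in \Cref{FFS-generators}: relabelling generators, and the multiplication map $x_n\mapsto y_ny_{n+1}$) carry the listed generators into the $\FFS$-subalgebra they span; the multiplication map is the substantive check, since it sends $s_i(g)$ to $s_i(g_1g_2)$ and one needs that these "word invariants" are already generated by the un-multiplied $s_i$'s under the $\FFS$-structure, which is exactly the content of Procesi's theorem phrased $\FFS$-algebraically as in \cite{Weidner}.

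I expect the main obstacle to be step (3): proving that no nonzero invariant survives on the components carrying a $c$. The naïve orbit-closure argument works cleanly over a field, but over $\Z$ one must argue that the invariant ring of the congruence action of $\GL_n\times\GL_1$ on $\mathrm{Mat}_n\times\GL_1$ (realising the $c$-component) contains no non-constant elements — equivalently that $\mathrm{Mat}_n /\!\!/_{\mathrm{cong}}\GL_n$ is a point; the clean way is to observe that the one-parameter subgroup $h=t\cdot I_n$ scales $A\mapsto t^2A$, so any homogeneous invariant of positive degree vanishes, and that this conclusion is insensitive to the base ring because it is a statement about the weights of a torus action on the polynomial ring. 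A secondary, more bookkeeping-style difficulty is keeping track of the finite central quotient $\langle(-I_n)^{n-1},-1\rangle$ throughout: one must check that the functions $\mu^{\pm 2}$ and $g\mu^{1-n}$ (rather than $\mu$ and $g$ separately) are exactly the ones that descend to the quotient, which is already noted to be the reason $\theta$ is well-defined, so this should go through without trouble. I would relegate both the $\FFS$-structure verification in step (5) and the central-quotient bookkeeping to brief remarks, citing \cite{Weidner} and \cite{procesiconcini}, and spend the bulk of the proof on the type-decomposition and the vanishing on $c$-components.
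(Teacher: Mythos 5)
Your step (3) is where the proposal breaks, and it breaks in a way that contradicts the very lemma you are proving. You assert that every invariant function vanishes identically on any $\Gal(F/F^+)$-type containing at least one $c$, and conclude that ``the listed generators, which vanish off the identity type, suffice.'' But the fourth listed generator, $(g,\mu)c\mapsto\mu^{\pm2}$, is supported \emph{exactly} on the $c$-component and is nonzero there: the $\widehat{\widetilde{U}}$-action leaves the $\GL_1$-coordinate of a $c$-point untouched, so $\mu^{\pm 2}$ (which descends to the central quotient) is an invariant that does not vanish on any type. So the vanishing claim is false even for a single $c$, and your summary of the generators misreads the statement.

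The failure is more serious on multi-$c$ types. If a component has $\GL_n$-coordinates $h_1,\dots,h_{r_2}$ at its $c$-slots, the action is the congruence action $h_i\mapsto\gamma h_i\gamma^t$, and then $h_ih_j^{-1}$ and $h_ih_j^{-t}$ transform by ordinary conjugation; hence $s_k(h_ih_j^{-1})$, $s_k(h_ih_j^{-t})$ and their words with the identity-slot variables are nonconstant invariants on that piece. Your orbit-closure heuristic cannot see these: it only kills invariants that extend across $h\to 0$, i.e.\ honest polynomials on $\M_n$; but the $c$-coordinate lives in $\GL_n$, on which the single-variable congruence action \emph{is} transitive over the algebraic closure (so the per-variable $\GL_n$-invariants are constants), while the multi-variable ones are genuinely nontrivial. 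The real content of the lemma is that these nontrivial mixed-type invariants are $\FFS$-generated by the single-variable ones, using identities such as $(h_i,\mu_i)c\cdot(h_j,\mu_j)c=(h_ih_j^{-t}\mu_j^{1-n},\mu_i\mu_j)$ to push products of $c$-elements into the identity component and then pull back the $s_k$, $\det^{-1}$, $\mu^{\pm2}$ generators along the $\FFS$-multiplication map. The paper carries this out by computing $\Z[(H^r)x]^H$ via the first fundamental theorem of \cite{procesiconcini} for the mixed tensor space $(V\otimes V^\vee)^{2r_1}\times(V\otimes V)^{r_2}\times(V^\vee\otimes V^\vee)^{r_2}$; your proposal replaces this calculation with a vanishing claim that is not true.

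Steps (1), (2), (4), (5) of your outline are in line with the paper's approach (decompose by Galois type using closedness of the components, use $\theta$ to untwist the central quotient, cite Procesi--De Concini for the identity type, check $\FFS$-generation), and the concern you raise about working over $\Z$ rather than a field is well placed. But as written the argument has a genuine gap: you need to compute the invariants on the $c$-heavy types rather than claim they are zero, and then show they are reachable via the $\FFS$ structure.
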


\begin{proof}
	Recall that we have an isomorphism
	$$\frac{\GL_n\times\GL_1}{\langle(-I_n)^{n-1}, -1\rangle}\to\GL_n\times\GL_1$$
	given by
	$$(g, \mu)\mapsto (g\mu^{1-n}, \mu^2).$$
	Transporting the action of $\GalF$ on $\frac{\GL_n\times\GL_1}{\langle (-I_n)^{n-1}, -1\rangle}$ to $\GL_n\times\GL_1$, we see that 
	$$\CU\cong (\GL_n\times\GL_1)\rtimes\GalF$$
	where $\GalF$ acts on $(g, \mu)\in \GL_n\times\GL_1$ via its quotient $\Gal(F/F^+)$, and $c\in\Gal(F/F^+)$ acts as
	$$c\cdot(g, \mu) = (g\iit \mu^{n-1}, \mu) .$$
%
	For an affine scheme $X_{/\Z}$, write $\Z[X]$ for the $\Z$-algebra such that $X = \Spec(\Z[X])$. Let $H = {\GL_n\times\GL_1}$. Fix $r\iZ$. Let 
	$$\CU^r\sslash H := \Spec(\Z[\CU^r]^{H}),$$
	where $H$ acts by pointwise conjugation, and let
	$$\pi: \CU^r\to \CU^r\sslash H$$
	be the quotient map. As a $\Z$-scheme, 
	$$\CU^r = \bigsqcup_{x\in \Gal(F/F^+)^r} (H^r)x ,$$
	where the subsets $(H^r)x \sub \CU^r$ are closed and pairwise disjoint. Hence,
	$$\Z[\CU^r] \cong \prod_{x\in \Gal(F/F^+)^r}\Z[(H^r)x].$$
	Moreover, the subsets $(H^r)x$ are stable under the conjugation action of $H$. Hence, by \cite[Theorem 3]{Seshadri}, the subsets $\pi((Hx)^r)$ are closed, disjoint subsets of $\CU^r\sslash H$ and, since $\pi$ is surjective, we see that  
	$$\CU^r\sslash H = \bigsqcup_{x\in \Gal(F/F^+)^r}\pi((H^r)x).$$
	It follows that
	$$\Z[\CU^r]^{H}\cong \prod_{x\in \Gal(F/F^+)^r}\Z[(H^r)x]^H.$$
	
	Consider a component $(H^r)x$, where $x = (\underbrace{1,\ldots, 1}_{r_1\text{ times}}, \underbrace{c, \ldots, c}_{r_2\text{ times}})$. Recall that $(\gamma, \nu)\in\GL_n\times\GL_1$ acts by conjugation on $H$ and as
	$$(g, \mu)c\mapsto (\gamma g\gamma^t\nu^{n-1}, \mu) = ((\gamma\nu^{(n-1)/2})g (\gamma\nu^{(n-1)/2})^t, \mu)$$
	on $Hc$. Since $\GL_n\times\GL_1$ acts trivially on the $\GL_1$ component, we have
	$$\Z[Hc]^H = \Z[\GL_1]\tensor\Z[\GL_n]^{H},$$
	where the action of $H$ on $\GL_n$ is given by $(\gamma, \nu)\cdot g = (\gamma\nu^{(n-1)/2})g(\gamma\nu^{(n-1)/2})^t$. In particular, this action of $H$ on $\GL_n$ factors through an action of $\GL_n$ on $\GL_n$ via the map that sends $(\gamma, \nu)\in\GL_n\times\GL_1$ to $\gamma\nu^{(n-1)/2}$ for any choice of square root of $\nu$.
	
	We see that
	\[\Z[(H^r)x]^H = \Z[\GL_1^r]\tensor\Z[\GL_n^{r_1}\times\GL_n^{r_2}]^{\GL_n},\]
	where the action of $\GL_n$ on the first $r_1$ $\GL_n$'s is by conjugation and the action on the second $r_2$ $\GL_n$'s is by $\gamma\cdot g = \gamma g \gamma^t$.
	
	Now, the map $\GL_n\hookrightarrow\M_n\times\M_n$ that sends $g\mapsto (g, g\ii)$ is a closed embedding. Hence, by \cite[Theorem 3]{Seshadri}, the restriction map
	\[\Z[\M_n^{r_1}\times\M_n^{r_1}\times\M_n^{r_2}\times\M_n^{r_2}]^{\GL_n}\to \Z[\GL_n^{r_1}\times\GL_n^{r_2}]^{\GL_n},\]
	is a surjection.
	
	Consider $\Z[\M_n^{r_1}\times\M_n^{r_1}\times\M_n^{r_2}\times\M_n^{r_2}]^{\GL_n}$. Here, $\gamma \in \GL_n$ acts on $(g_1,\ldots, g_{2r_1}, h_1,\ldots, h_{r_2}, h'_1,\ldots, h'_{r_2})$ by taking it to
	\[ (\gamma g_1\gamma\ii,\ldots, \gamma g_{2r_1}\gamma\ii, \gamma h_1\gamma^t,\ldots, \gamma h_{r_2}\gamma^t, \gamma\iit h'_1\gamma\ii,\ldots, \gamma\iit h'_{r_2}\gamma\ii)\]
	Let $V=\Z^n$. We identify $\M_n(\Z) = \End(V)$. We can identify:
	\begin{itemize}
		\item $V\tensor V\dual \cong \End(V)$ via $(v\tensor\phi)\mapsto (F_{(v,\phi)}:u\mapsto \phi(u)v)$. Under this isomorphism, $(\gamma v\tensor \gamma\phi)\mapsto\gamma F_{(v,\phi)}\gamma\ii$
		\item Identifying $V\cong V\dual$ using the standard inner product, we have a $\GL_n$-isomorphism $V\tensor V\cong \End(V)$, where $\gamma\in \GL_n$ acts on $F\in\End(V)$ by $\gamma\cdot F(v) = \gamma F(\gamma^tv)$.
		\item Identifying $V\cong V\dual$ using the standard inner product, we have a $\GL_n$-isomorphism $V\dual\tensor V\dual\cong \End(V)$, where $\gamma\in \GL_n$ acts on $F\in\End(V)$ by $\gamma\cdot F(v) = \gamma\iit F(\gamma\ii v)$.
	\end{itemize}
	
	Hence, $\Z[\M_n^{r_1}\times\M_n^{r_1}\times\M_n^{r_2}\times\M_n^{r_2}]^{\GL_n}\cong\Z[(V\tensor V\dual))^{2r_1}\times (V\tensor V)^{r_2}\times (V\dual\tensor V\dual)^{r_2}]^{\GL_n}$. Applying \cite[Theorem 3.1]{procesiconcini}, we see that this ring is generated by elements of the form $\phi(v)$,	where $\phi\in V\dual$, $v\in V$.
	Unwinding, we find that the invariants of $(A_1\,\ldots A_{2r_1}, B_1,\ldots B_{r_2}, C_1,\ldots C_{r_2})\in \M_n^{r_1}\times\M_n^{r_1}\times\M_n^{r_2}\times\M_n^{r_2}$ are polynomials in $s_i(M)$ where $M$ is in the free semigroup generated by $\{A_i, B_iC_j, B_iC_j^t, B_i^tC_j\}$. 
	
	Unwinding further, we see that $\Z[H^rx]^{H}$ is generated by maps which send 
$$((g_1,\mu_1),\ldots (g_{r_1}, \mu_{r_1}),(h_1,\mu_1)c,\ldots (h_{r_2}, \mu_{r_2})c)$$ to:
	\begin{enumerate}
		\item $s_i(M)$, $\det\ii(M)$, where $s_i$ is the $i\th$ coefficient of the characteristic polynomial and $M$ is in the free group generated by $\{g_i, h_ih_j{}\ii, h_ih_j{}\iit\}$;
		\item elements of the free group generated by the $\mu_i$.
	\end{enumerate}
	
	Finally, observing that
	\[(h_i, \mu_i)c \cdot (h_j, \mu_j)c = (h_ih_j\iit\mu_j^{1-n}, \mu_i\mu_j)\]
	\[(h_i, \mu_i)c \cdot((h_j, \mu_j)c)\ii = (h_ih_j\ii, \mu_i\mu_j{\ii})\]
	and unwinding a final time, we deduce that, as an $\FFS$-algebra, $\Z[\CU\tdo]^{\widehat{\widetilde{U}}}$ is generated by the elements
	\begin{alignat*}{2}
		&(g, \mu)\;\mapsto \;s_i(g\mu^{1-n}) &&\quad (g, \mu)c\;\mapsto\; 0,\\
		&(g, \mu)\;\mapsto\; \det(g\mu^{1-n})\ii &&\quad (g, \mu)c\;\mapsto\; 0,\\
		&(g, \mu)\;\mapsto\; \mu^{\pm2}&&\quad(g, \mu)c\;\mapsto \;0,\\
		&(g,  \mu)\;\mapsto\; 0&&\quad(g, \mu)c\;\mapsto \;\mu^{\pm2},
	\end{alignat*} 
	as required.
\end{proof}

\subsection{Oddness in low weight}

We can now prove \Cref{main-thm-intro}.

\begin{proof}[Proof of \Cref{main-thm-intro}]
	The result follows by replacing Taylor's pseudocharacters with Lafforgue's pseudocharacters in the proof of \cite[Theorem 10.5.3]{GoldringKoskivirta}. Let $\T$ be the abstract Hecke algebra generated by the Hecke operators of $\pi$, let $E$ be the finite extension of $\Ql$ generated by the Hecke parameters of $\pi$, and let $\theta:\T\to\O_E$ be the Hecke map associated to $\pi$. Then \cite{GoldringKoskivirta} consider the reductions modulo $\l^m$, which correspond to torsion eigenclasses in coherent cohomology of the reduction of the Shimura variety corresponding to $U(a,b)$ modulo $\l^m$.

In Theorem 10.4.1 they associate Galois representations to such torsion classes by proving that these Hecke maps factor through a Hecke algebra $\T_m$ acting on cuspidal automorphic representations with regular discrete series. In particular, they produce (see \cite[(10.6.2)]{GoldringKoskivirta}) a sequence of Galois representations
	$$\rho_m:\Ga F \to \GL_n(\T_m\tensor\Qlb)$$
	such that:
	\begin{itemize}
		\item $\T_m$ is the Hecke algebra (which \cite{GoldringKoskivirta} denote by $\mathcal{H}^{0,+}(\nu+ak\eta_\omega)$, with $a, k$ depending on $m$) parametrising automorphic representations of $\U(a,b)$ of a certain regular weight depending on $m$.
		\item For each $m$, the map $\T\to\O_E\to\O_E/\l^m$ factors through a map $r_m:\T_m\to\O_E/\l^m$. In other words, the eigenvalue system of $\pi$ is congruent modulo $\l^m$ to the eigenvalue system of a regular form $\pi_m$.
	\end{itemize}
By Theorem \ref{c-valued-cohomological} the Galois representation $\rho_m$ lifts to a representation 
$$R_m:\Ga F \to \CU(\T_m \otimes \Qlb)$$ with the correct sign.	Let $\Theta_m\tdo$ be the $\CU$-pseudocharacter of $\GalF$ over $\T_m\tensor\Qlb$ attached to $R_m$ by Theorem \ref{lafforge-to-rep}. By \Cref{invariants}, $\Theta_m\tdo$ is completely determined by
	$$\Theta_m(f):\GalF\to\T_m\tensor\Qlb,$$
	where $f$ is one of the elements
		\begin{alignat*}{2}
		&(g, \mu)\;\mapsto \;s_i(g\mu^{1-n}) &&\quad (g, \mu)c\;\mapsto\; 0,\\
		&(g, \mu)\;\mapsto\; \det(g\mu^{1-n})\ii &&\quad (g, \mu)c\;\mapsto\; 0,\\
		&(g, \mu)\;\mapsto\; \mu^{\pm2}&&\quad(g, \mu)c\;\mapsto \;0,\\
		&(g,  \mu)\;\mapsto\; 0&&\quad(g, \mu)c\;\mapsto \;\mu^{\pm2},
	\end{alignat*} 
	of $\Z[\CU]^{\widehat{\widetilde{U}}}$. Since the characteristic polynomial of $\rho_m$ has coefficients in $\T_m\tensor\Zlb$ and since $d \circ R_m=\epsilon$, it follows that, for each such $f$, $\Theta_m(f)$ factors through $\T_m\tensor\Zlb$. Thus, by \Cref{lafforgue-factor}, $\Theta_m\tdo$ is a $\CU$-pseudocharacter of $\GalF$ over $\T_m\tensor\Zlb$. In particular, this argument promotes the Taylor pseudocharacter of \cite[(10.6.3)]{GoldringKoskivirta} to a Lafforgue pseudocharacter and thereby strengthening \cite[Theorem 10.4.1]{GoldringKoskivirta}. Hence, composing $\Theta_m\tdo$ with the map $r_m$, we obtain a $\CU$-pseudorepresentation of $\GalF$ over $\O_E/\l^m$. Moreover, if $m'>m$, then 
	$$(r_m\circ\Theta_m)\tdo=(r_{m'}\circ\Theta_{m'})\tdo\pmod{\l^m}.$$
	Hence, we can form a $\CU$-pseudocharacter
	$$\Theta\tdo = \varprojlim_m (r_m\circ\Theta_m)\tdo$$
	of $\GalF$ over $\O_E$. Viewing $\O_E$ as a subalgebra of $\Qlb$ and applying \Cref{lafforge-to-rep}, we obtain the Galois representation
	$$R_{\pi}:\GalF\to\CU(\Qlb).$$
The fact that $R_{\pi}$ has the correct sign at infinity follows from the fact that $\Theta(f)$ is the limit of $\Theta_m(f)$ when $f$ is the map
	\begin{alignat*}{2}
	&(g,  \mu)\;\mapsto\; 0&&\quad(g, \mu)c\;\mapsto \;\mu^{\pm2}.
\end{alignat*} 
	The fact that $R_{\pi}$ satisfies local-global compatibility at unramified primes follows from the fact that $\Theta(f)$ is the limit of $\Theta_m(f)$, where $f$  is one of the elements
	\begin{alignat*}{2}
		&(g, \mu)\;\mapsto \;s_i(g\mu^{1-n}) &&\quad (g, \mu)c\;\mapsto\; 0,\\
		&(g, \mu)\;\mapsto\; \det(g\mu^{1-n})\ii &&\quad (g, \mu)c\;\mapsto\; 0,\\
		&(g, \mu)\;\mapsto\; \mu^{\pm2}&&\quad(g, \mu)c\;\mapsto \;0.
	\end{alignat*} 
\end{proof}

\subsection{$\GU(a,b)$-representations}\label{invariant-theory-remark}
	We conclude by highlighting a constraint to our approach and why it cannot be used to prove an analogous result for automorphic representations of $\GU(a,b)$.
	
	The key input to the above proof is that any element of $\Z[\CU\tdo]^{\widehat{\widetilde{U}}}$ is generated by elements of $\Z[\CU]^{\widehat{\widetilde{\U}}}$ and, therefore, that any $\CU$-pseudocharacter $\Theta\tdo$ is completely determined by its action on elements of $\Z[\CU]^{\widehat{\widetilde{U}}}$. Indeed, when $\Theta\tdo$ comes from a representation and $f$ is such an element, then the map $\Theta(f):\GalF\to\T\tensor\Qlb$ can be related to automorphic data by viewing its action on Frobenius elements. On the other hand, if $f\in \Z[\CU^r]^{\widehat{\widetilde{U}}}$ were an element that is not generated by elements of $\Z[\CU]^{\widehat{\widetilde{\U}}}$, then there is no obvious way to relate map $\Theta(f):\GalF^r\to\T\tensor\Qlb$ to automorphic data: on the automorphic side, it is unclear what the Hecke operators should be; and on the Galois side, it is unclear what the analogues of Frobenius elements should be for powers of the absolute Galois group.
	
	Hence, our arguments cannot be used to show that the Galois representation attached to an irregular automorphic representation of $\GU(a,b)$ has image in $\CGU(\Qlb)$, since, when $a+b$ is even, the $\FFS$ algebra $\Z[\CGU\tdo]^{\widehat{\widetilde{\GU}}}$ is \emph{not} generated by elements of $\Z[\CGU]^{\widehat{\widetilde{\GU}}}$. Indeed, similarly to the case of $\mathrm{SO}_{2n}$, (c.f. \cite[Lemma 18]{Weidner}), the full polarisation of the Pfaffian function $\mathrm{pf}(A-A^t)$ is an element of $\Z[\CGU^n]^{\widehat{\widetilde{\GU}}}$ that cannot be generated by elements of $\Z[\CGU]^{\widehat{\widetilde{\GU}}}$.
	
	Moreover, when $n = a+b$ is a multiple of $4$, the representations
	\[R_1, R_2: (\Z/4\Z)^2\to \CGU(\Qlb)\]
	defined by
	\[R_1: (0,1)\mapsto \br{\fourmat{I_m}{\Phi_m}\Phi_n, 1, 1}c; \quad (1,0)\mapsto\br{\fourmat{\Phi_m}{I_m}\Phi_n, 1, 1}c \]
	and
	\[R_2: (0,1)\mapsto \br{\fourmat{\zeta_nI_m}{\zeta_n\Phi_m}\Phi_n, 1, 1}c; \quad (1,0)\mapsto\br{\fourmat{\zeta_n\Phi_m}{\zeta_nI_m}\Phi_n, 1, 1}c,\]
	where $\zeta_n$ is a primitive $n\th$ root of unity, are everywhere locally conjugate\footnote{i.e. for every $a\in(\Z/4\Z)^2$, $R_1(a)$ and $R_2(a)$ are conjugate. See \cite{Larsen-conj}.}, but are not globally conjugate. In particular, if $\Theta\tdo$ is pseudocharacter attached to a $\GU$-valued representation, then the actions of $\Theta(f)$ on Frobenius elements for $f\in \Z[\CGU]^{\widehat{\widetilde{\GU}}}$ are not enough to uniquely determine $\Theta\tdo$.

\bibliography{bibliography}
\bibliographystyle{alpha}

\end{document}